\newtheorem{Proposition}{Proposition}[section]
\newtheorem{Theorem}{Theorem}[section]
\newtheorem{example}{Example}[section]
\newtheorem{remark}{Remark}[section]
\newtheorem{Corollary}{Corollary}[section]
\newtheorem{Lemma}{Lemma}[section]
\newtheorem{question}{Question}[section]
\newcommand{\ideal}{\triangleleft}
\DeclareMathOperator{\ann}{Ann}
\DeclareMathOperator{\ass}{Ass}
\DeclareMathOperator{\zt}{ZT}
\newcommand{\ZZ}{\mathbb{Z}}
\newcommand{\UU}{\mathcal{U}}
\newcommand{\diam}[1]{{\rm diam}(#1)}
  \definecolor{polona}{rgb}{.2,.8,.2}
   \definecolor{nik}{rgb}{.1,.8,.9}
\begin{document}

\title{The total zero-divisor graph of commutative rings}

\date{\today}

\author{Alen \DJ uri\' c}
\address[A. \DJ uri\' c]{Faculty of Natural Sciences and Mathematics, University of Banja Luka, Mladena Stojanovi\' ca 2, 78000 Banja Luka, Bosnia and Herzegovina}
\email{alen.djuric@protonmail.com}

\author{Sara Jev\dj eni\' c}
\address[S. Jev\dj eni\' c]{Faculty of Natural Sciences and Mathematics, University of Banja Luka, Mladena Stojanovi\' ca 2, 78000 Banja Luka, Bosnia and Herzegovina}
\email{sarajevdjenic9@gmail.com}

\author{Polona Oblak}
\address[P.~Oblak]{Faculty of Computer and Information Science, University of Ljubljana, Ve\v cna pot 113, SI-1000 Ljubljana, Slovenia}
\email{polona.oblak@fri.uni-lj.si}

\author{Nik Stopar}
\address[N. Stopar]{Faculty of Electrical Engineering, University of Ljubljana, Tr\v za\v ska cesta 25, 1000 Ljubljana, Slovenia}
\email{nik.stopar@fe.uni-lj.si}

\thanks{The authors acknowledge the financial support from the Slovenian Research Agency  
(research core funding no. P1-0222) and the bilateral project no. BI-BA/16-17-025 funded by Slovenian Research Agency and Ministry of Civil Affairs, Bosnia and Herzegovina.}

 \subjclass[2010]{13E10,05E40,05C25}
 \keywords{Commutative ring, Zero-divisor graph, Total graph}

\maketitle

\begin{abstract}
In this paper we initiate the study of the total zero-divisor graphs over commutative rings with unity. 
These graphs are constructed by both relations that arise from the zero-divisor graph and from the total graph
of a ring. We characterize Artinian rings with the connected total zero-divisor graphs and give their
diameters. Moreover, we compute major characteristics of the total zero-divisor graphs of 
the ring $\ZZ_m$ of integers modulo $m$ and prove that the total zero-divisor graphs of
$\ZZ_m$ and $\ZZ_n$ are  isomorphic if and only if $m=n$.
\end{abstract}

\bigskip
\section{Introduction}
\bigskip

Recently, the interplay between properties of algebraic structures and their relationship 
graphs has been studied extensively. To further the understanding of the structure of  zero-divisors in 
semigroups, several various graphs were introduced.

In 1988, Beck \cite{MR944156} introduced  the coloring properties of a graph,
whose vertices were all the elements of the ring and two vertices were adjacent if their product was 0. This 
definition was simplified in 1999 by Andreson and Livingston \cite{MR1700509} to the zero-divisor graph.
The vertices of the  zero-divisor graph are all nonzero zero-divisors 
and distinct vertices $x$ and $y$ are adjacent if and only if $xy=0$. 
Compared with the Beck's graph, they omitted  0 and all vertices which are not zero-divisors in the graph, and thus the properties of the zero-divisors in the ring were more clearly reflected.
Zero-divisor graphs were largely studied in the last two decades (see e.g. \cite{MR3675901, MR2932598} for survey papers). 
In 2002, Mulay \cite{MR1915011} defined a compressed zero-divisor graph, in which the vertices are the equivalence classes of zero-divisors of the ring. Compressed graphs are smaller hence easier to investigate than
the zero-divisor graphs. All those graphs were extensively studied over structures with one or two operations.

In \cite{MR2441996} Anderson and Badawi introduced the total graph of a commutative ring $R$ as the graph with vertices of all elements of $R$ and with edges  $x \sim y$ 
for distinct $x, y \in R$ if $x + y$ is a zero-divisor in $R$. 
The total graph of the ring engages both ring operations instead of studying only multiplication as in the zero-divisor graph and therefore reflects more structure of the subset of zero-divisors in a given ring.

In this paper we initiate the study of both relationships to reveal more information about the commutative 
rings. Denote the set of all zero divisors of $R$ by $Z(R)$. The \emph{total zero-divisor graph} $\zt\left(R\right)$ of a commutative unital ring $R$ is a 
simple graph whose
vertex set is the set of all nonzero zero divisors of $R$, and where distinct vertices 
$u$ and $v$ are adjacent if and 
only if 
$$uv=0 \, \text{ and }\, u+v\in Z\left(R\right).$$

In Section \ref{connectedness} we investigate the connectedness of a total zero-divisor graph of a
commutative unital ring. In Theorem \ref{thm:iff} we characterize Artinian rings with a connected total 
zero-divisor graph. On the contrary to the zero-divisor graph or to the total graph, this characterization
highly depends on the associated prime ideals of the ring. Moreover, we provide the diameter of the graph (see Theorem \ref{thm:diam}), which depends on the maximal ideals of the ring.

In Section \ref{Zm} we compute the main characteristics of the total zero-divisor graph of 
the ring  $\ZZ_m$ of integers modulo $m$, such as girth, degrees, chromatic number, domination
number and metric dimension. Since $\ZZ_m$ is a prototype for certain classes of rings, we believe 
some of the results might be generalized. In Corollary \ref{cor:deltas--m}  we show that the total 
zero-divisor graphs of
$\ZZ_m$ and $\ZZ_n$ are  isomorphic if and only if $m=n$. Moreover, we characterize all 
acyclic total zero-divisor graphs in Corollary \ref{acyclic}.

\bigskip
\section{Preliminaries}
\bigskip

Throughout the paper, let $R$ be a  commutative  ring with unity. Let $Z\left(R\right)$ denote the
set of zero-divisors in $R$ and $\UU\left(R\right)$ the set of invertible elements in $R$. If $I$ is a
nilpotent ideal in $R$, we call the smallest positive integer $\ell$ such that the product of any $\ell$ 
 elements of $I$ is $0$, a \emph{nilindex} of $I$.

For an $R$-module $M$ and a subset $S \subseteq M$ we denote by 
$$\ann_R S=\{r \in R; \;rs=0 \text{ for all } s \in S\}$$  
the \emph{annihilator of $S$ in $R$}. We will omit index assuming that it is $R$ if not stated otherwise.
We say that a prime ideal $P$ of $R$ is an \emph{associated prime} of $M$ if $P=\ann(m)$ for some $m \in M$.
The set of all associated primes of $M$ is denoted by $\ass(M)$.
 If $R$ is  Noetherian  and $M$ a finitely generated $R$-module, then $\ass(M)$ is finite 
(see e.g. \cite[Proposition (7.G)]{MR575344}).

We say that an element $r \in R$ is a zero-divisor for $M$ if $rm=0$ for some non-zero $m \in M$.
Recall that if $R$ is Noetherian ring and $M$ a finitely generated non-zero $R$-module, then the set 
of zero-divisors for 
$M$ is the union of all the associated primes of $M$ \cite[Proposition (7.B)]{MR575344}.
In the special case where we treat $R$ as an
$R$-module, we have
\begin{equation}\label{Zunion}
Z(R)=\bigcup_{P \in\ass(R) }P.
\end{equation}

Our notation for the graphs is the following. For a graph $G=(V(G),E(G))$ we denote its order by 
$|G|=|V(G)|$.  
The sequence of edges $x_0 \sim x_1 \sim \ldots \sim x_{k-1} \sim x_{k}$ in a graph is called 
\emph{a path of length $k$}. The distance between two vertices is the length of the shortest path between them and the diameter $\diam{G}$ is the largest distance between any two vertices of the graph. 
The \emph{open neighbourhood} of a vertex $v$ is denoted by $N_G(v)$ and consists of all vertices at 
distance 1 from vertex $v$ and the \emph{closed neighbourhood} of a vertex $v$ is denoted by 
$N_G[v]=N_G(v) \cup \{v\}$ and consists of all vertices at distance at most 1 from vertex $v$.
When stated without any qualification, a neighbourhood is assumed to be open.
 
 A path $x_0 \sim x_1\sim \ldots \sim x_{k} \sim x_{0}$ is called a \emph{cycle}.  A graph with a cycle will be called a \emph{cyclic} graph and \emph{acyclic} otherwise.
A \emph{Hamiltonian cycle} of a graph $G$ is a cycle that contains every vertex of $G$. A graph is 
called \emph{Hamiltonian} if it contains a Hamiltonian cycle. A graph G is \emph{Eulerian} if it contains a cycle that consists of all the edges of G.
A complete graph on $n$ vertices will be denoted by $K_n$ and a path with $n$ vertices will be denoted by $P_n$.


\bigskip
\section{Connectedness of the total zero-divisor graph}\label{connectedness}
\bigskip

The zero-divisor graphs are always connected with the diameter at most three \cite{MR1700509}. On the other hand,
 the total graph of a commutative ring is not connected if the set of zero-divisors  forms an ideal  \cite{MR2441996}. We will see that the 
 connectedness of the total zero-divisor graph of the ring $R$  depends on the 
existence of a maximal associated prime $P \ideal R$, such that $P \cap \ann P=\{0\}$. 
Note that for example in the ring $\mathbb{Z}_{6}$ every prime ideal intersects its annihilator trivially, but in 
$\mathbb{Z}_{36}$ no prime ideal intersects its annihilator trivially.

First we prove the following lemma, which will give us a necessary condition for the graph $\zt(R)$ to be
connected. We say that $P$ is a maximal associated prime of $R$ if it is maximal among all associated primes of $R$.

\begin{Lemma}\label{E-notconn}
Let $R$ be a  Noetherian ring  with unity and  assume $Z(R)\neq \{0\}$. If there exists a maximal associated prime 
$P$ of $R$ such that $P \cap \ann P=\{0\}$ then
the graph $\zt(R)$ is not connected.
\end{Lemma}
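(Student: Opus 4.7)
The plan is to exhibit an isolated vertex of $\zt(R)$; combined with the easy fact that the vertex set has at least two elements, this forces disconnectedness. The first step is prime avoidance: because $\ass(R)$ is finite (as $R$ is Noetherian) and $P$ is maximal in $\ass(R)$, no other associated prime contains $P$, so $P\not\subseteq\bigcup_{Q\in\ass(R)\setminus\{P\}}Q$. I would choose
\[
x\in P\setminus\bigcup_{Q\in\ass(R)\setminus\{P\}}Q,
\]
which is automatically a nonzero zero-divisor, hence a vertex of $\zt(R)$, and claim $x$ has no neighbor.

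To check that, suppose $x\sim y$. Then $xy=0$ and, by \eqref{Zunion}, there is some $Q^{*}\in\ass(R)$ with $x+y\in Q^{*}$. If $Q^{*}\neq P$, then $x\notin Q^{*}$ by the choice of $x$, and additive closure of $Q^{*}$ forces $y\notin Q^{*}$ as well; yet $xy=0\in Q^{*}$ contradicts primality of $Q^{*}$. Hence $Q^{*}=P$, and then $x+y\in P$ with $x\in P$ gives $y\in P$.

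The next step passes to the cyclic submodule $Ry\subseteq R$. Since $\ass(Ry)\subseteq\ass(R)$ (as $Ry$ is a submodule of $R$), and $x$ is a zero-divisor for $Ry$ (via $xy=0$ and $y\neq 0$), $x$ lies in some $Q\in\ass(Ry)$; by the choice of $x$ this $Q$ must be $P$, so $P\in\ass(Ry)$. Writing $P=\ann z$ for a nonzero $z\in Ry$, one has $z\in Ry\subseteq P$ (because $y\in P$), while $Pz=0$ places $z\in\ann P$. Thus $z\in P\cap\ann P=\{0\}$, contradicting $z\neq 0$.

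To conclude I would record that $|V(\zt(R))|\geq 2$: any nonzero $c$ with $P=\ann c$ lies in $\ann P\setminus\{0\}$, and $P\cap\ann P=\{0\}$ forces $c\notin P$, hence $c\neq x$; moreover $Z(R)\neq\{0\}$ rules out $P=0$ and confirms $c$ is genuinely a zero-divisor. I expect the middle step, locating $z\in P\cap\ann P$, to be the main obstacle: one must repackage the multiplicative datum $xy=0$ together with $y\in P$ via the associated primes of $Ry$, and it is precisely here that the maximality of $P$ in $\ass(R)$ (to pin some associated prime of $Ry$ to $P$) and the hypothesis $P\cap\ann P=\{0\}$ (to close out the contradiction) combine.
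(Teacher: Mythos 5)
Your proof is correct and follows the paper's strategy essentially step for step: prime avoidance to pick $x\in P$ outside every other associated prime, the deduction that any neighbour $y$ of $x$ lies in $P$ (you get there via primality of $Q^*$ applied to $xy=0$, the paper by multiplying $x+y$ by $x$ and $y$ --- same idea), and then the passage to the associated primes of $Ry=(y)$ to manufacture a nonzero $z\in P\cap\ann P$, which is indeed the crux. The one place you genuinely diverge is the vertex count: the paper supposes $\zt(R)$ has the single vertex $x$ and grinds out the contradiction $2x=1$ from $x^2=x$, whereas you directly exhibit a second vertex, namely the element $c$ with $\ann(c)=P$ furnished by $P\in\ass(R)$, noting that $c\in\ann P\setminus\{0\}$ together with $P\cap\ann P=\{0\}$ forces $c\notin P$, hence $c\neq x$; this is shorter and cleaner than the paper's argument. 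The only imprecision is your claim that the chosen $x$ is ``automatically'' nonzero: if $\ass(R)=\{P\}$ the avoided union is empty and $0$ itself lies in $P\setminus\emptyset$, so in that case you need the extra observation that $P=Z(R)\neq\{0\}$ to choose $x\neq0$ (the paper spends a sentence on exactly this degenerate case; your own final paragraph in fact shows the case cannot occur under the hypotheses). This is a one-line repair, not a genuine gap.
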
 

\begin{proof} Let $P$ be a maximal associated prime of $R$ such that $P \cap \ann P=\{0\}$. 
Since $R$ is Noetherian, the set $\ass(R)$ is finite. 
Due to the maximality of $P$, the prime avoidance lemma (see e.g. \cite[Lemma~3.3]{MR1322960}) implies that the set $P \setminus (\bigcup_{Q \in \ass(R)\backslash\{P\}} Q)$ is nonempty. 
If $P \setminus (\bigcup_{Q \in \ass(R)\backslash\{P\}} Q)=\{0\}$, then $\ass(R)=\{P\}$ and $P=0$. By \eqref{Zunion} this implies $Z(R)=\{0\}$, a contradiction. Hence $P \setminus (\bigcup_{Q \in \ass(R)\backslash\{P\}} Q)\neq \{0\}$.

Take any $0 \neq x \in P \setminus (\bigcup_{Q \in (\ass(R))\backslash\{P\}} Q)$. By \eqref{Zunion}, $x$ is a nonzero zero-divisor. We will show that $x$ is an isolated vertex of $\zt(R)$.
Let $y \neq x$ be a nonzero zero-divisor and suppose that $x$ and $y$ are adjacent in $\zt(R)$.
By \eqref{Zunion} this implies $x+y \in Q$ for some $Q \in \ass(R)$. Multiplying $x+y$ by $x$ and $y$ respectively and taking into account that $xy=0$ gives us $x,y \in Q$, since $Q$ is prime. By definition of $x$ we have $Q=P$, so $y \in P$.

If we view the ideal $(y)$ as an $R$-module, then $x$ is a zero-divisor for $(y)$, so $x \in \widehat{P}$ for some associated prime $\widehat{P}$ of $(y)$. In particular, $\widehat{P}=\ann(z)$ for some $z \in (y) \subseteq P$. But $\widehat{P}$ is also an associated prime of $R$, hence $\widehat{P}=P$ by definition of $x$. We conclude that $z \in P \cap \ann P=0$, which implies $P=\ann (z)=R$, a contradiction since $P$ is prime. This shows that $x$ is an isolated vertex. 

To conclude the proof we need to show that $\zt(R)$ contains at least two vertices. Assume on the contrary that $\zt(R)$ has a single vertex $x$. Then $Z(R)=\{0,x\}$.
This implies that $\{0,x\}$ is the only associated prime of $R$, hence $P=\{0,x\}$. By assumption $P \cap \ann P=\{0\}$ which implies $x \notin \ann P$. Consequently $x^2 \neq 0$. But $x^2$ is a zero-divisor, so $x^2=x$ which implies $x(1-x)=0$. Thus $1-x$ is a non-zero zero-divisor as well. Hence $1-x = x$ which implies  $2x=1$, a contradiction since $x$ is a zero-divisor.
\end{proof} 

In the next example we show that Lemma \ref{E-notconn} is not valid for non-Noetherian rings.

\begin{example}
Let $R$ be the ring of all real valued functions on $[0,1]$. To show that $R$ is not
Noetherian, let $I_{S}\subseteq R$ denote a subring that contains functions
which vanish on a subset $S\subseteq\left[0,1\right]$. For any infinite
decreasing chain $S_{0}\varsupsetneq S_{1}\varsupsetneq \cdots$ of
subsets of $\left[0,1\right]$, we get an infinite increasing chain
$I_{S_{0}}\varsubsetneq I_{S_{1}}\varsubsetneq\cdots$ of ideals of
$R$.

Now, define $f\left(x\right)=\begin{cases}
1, & x=0,\\
0, & x\neq0.
\end{cases}$ Let $P=\ann\left(f\right)=\left\{ g; \; g\left(0\right)=0\right\} $ and note that
$\ann  P=\left\{ h; \; h\left(x\right)=0 \text{ for all } x\neq0\right\} $, hence 
$P \cap \ann P=\{0\}$. Since $R/P\cong\mathbb{R}$, it follows that $P$ is maximal associated prime.
Since the zero-divisors in $R$ are the functions which are equal to 0 on a set with a nonzero measure, it is
possible to construct a path between any two vertices in  $\zt(R)$. Therefore, $\zt(R)$ is connected. 
\end{example}

We will prove that in the case of a  Noetherian and Artinian ring $R$, the graph $\zt(R)$ is connected if and only if
 $P \cap \ann P\ne\{0\}$ for all maximal associated prime ideals $P \ideal R$. First we need a few technical lemmas.



\begin{Lemma}\label{lemma:product}
Let $R$ be an Artinian 
ring and 
choose $a_1,a_2 \in R$, such that the ideals $Q_1=\ann(a_1)$ and 
$Q_2=\ann(a_2)$ are distinct prime ideals in $R$. It follows that $a_1a_2=0$.
\end{Lemma}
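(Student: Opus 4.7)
The plan is to exploit the well-known fact that in an Artinian ring every prime ideal is maximal. Once we know that $Q_1$ and $Q_2$ are distinct maximal ideals, they are coprime, i.e.\ $Q_1 + Q_2 = R$. This gives a decomposition of the unit of $R$ into a sum of annihilating elements, which is exactly what is needed to kill $a_1a_2$.

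More precisely, first I would note that $Q_1, Q_2$ are prime ideals in the Artinian ring $R$, hence maximal. Being distinct maximal ideals forces $Q_1 + Q_2 = R$, so we may choose $q_1 \in Q_1$ and $q_2 \in Q_2$ with $q_1 + q_2 = 1$. By the very definition of the ideals $Q_i = \ann(a_i)$, we have $a_1 q_1 = 0$ and $a_2 q_2 = 0$.

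The final step is a one-line manipulation: multiply $a_1 a_2$ by $1 = q_1 + q_2$, so that
\[
a_1 a_2 = a_1 a_2 (q_1 + q_2) = a_2(a_1 q_1) + a_1(a_2 q_2) = 0.
\]
There is no real obstacle here; the only substantive ingredient beyond the coprimality of distinct maximal ideals is the standard fact that primes in an Artinian ring are maximal, which can be cited without proof. I would not expect any complication, and the argument does not even use the full strength of $R$ being Artinian commutative with unity, only that the two specific primes $Q_1$ and $Q_2$ are maximal and distinct.
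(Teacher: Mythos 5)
Your proof is correct, and it is genuinely different from (and shorter than) the one in the paper. You use only two facts: that prime ideals in an Artinian ring are maximal, and that distinct maximal ideals are comaximal, so that $1=q_1+q_2$ with $q_i\in Q_i=\ann(a_i)$, whence $a_1a_2=a_2(a_1q_1)+a_1(a_2q_2)=0$. The paper instead invokes the structure theorem for Artinian rings, writing $R$ as a product of local rings $R_1\times\cdots\times R_k$, determines the shape of prime ideals in such a product, locates the supports of $a_1$ and $a_2$, and finishes with a case analysis using nilpotency of the maximal ideal in a local Artinian factor. Your argument buys brevity and extra generality: it works in any commutative unital ring in which the two specific primes $\ann(a_1)$ and $\ann(a_2)$ are maximal, with no decomposition theory needed. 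What the paper's longer route buys is the explicit product description of prime ideals and annihilators in an Artinian ring, which is in the same spirit as the computations reused elsewhere in Section 3; but for the lemma as stated, your argument is complete and has no gaps.
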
 

\begin{proof}
Since $R$ is Artinian, we can suppose $R=R_1 \times R_2 \times \dots \times R_k$, for some local rings $R_1,R_2,...,R_k$. 

Let us first show that if $P$ is a prime ideal in $R$, then $P=P_1\times P_2 \times \dots \times P_r$,
where  $P_i$ is a prime ideal in $R_i$ for exactly one index $i$ and $P_j=R_j$ for all $j \neq i$. 
Assume the opposite and without loss of generality  suppose that $P_1 \neq R_1$ and $P_2 \neq R_2$. 
Choose $x=(p_1,1,0,0,...,0)$ and $y=(1,p_2,0,0,...0)$, where $p_i \in  P_i$. It follows that $xy\in P$, but neither $x$ nor $y$ is an element of the ideal $P$, a contradiction. 

Let $Q_1=\ann(a_1)$ have a proper prime ideal at $j_1$-th position and let $Q_2=\ann(a_2)$ have a 
proper prime ideal at position $j_2$, i.e. for $i=1,2$ we have 
$Q_i=R_1 \times \dots R_{j_i-1} \times P_i \times R_{j_i+1} \times \dots \times R_k$,  
where $P_i$ is a prime ideal in $R_{j_i}$.
Since $Q_k=\ann(a_k)$, $k=1,2$, it follows that
$a_1=(0,\ldots,0,p_1,0,\ldots,0)$ and $a_2=(0,\ldots,0,p_2,0,\ldots,0)$, where 
$P_k=\ann_{R_{j_k}}(p_k) \subseteq R_{j_k}$. 
If $j_1 \ne j_2$, then clearly $a_1a_2=0$. 
If $j_1=j_2$, assume without loss of generality that $j_1=j_2=1$. 
If neither $p_1$ nor $p_2$ is a zero-divisor in $R_1$, they are both invertible in $R_1$ and hence 
$Q_1=Q_2$, a contradiction. Otherwise, assume that $p_1$ is a zero-divisor.
Since $R_1$ is Artinian and local, we have $p_1^n=0 \in P_2$ for some positive
integer $n$. Since $P_2$ is prime, we have $p_1 \in P_2=\ann(p_2)$, and therefore $p_1p_2=0$
and   $a_1a_2=0$. Similarly we argue that $a_1a_2=0$ in the case $p_2$ is a zero-divisor.
\end{proof}

\begin{Lemma}\label{lemma:clique}
Let $R$ be an Artinian ring and $I$ ideal generated by $$S=\bigcup\limits_{P \in \ass(R)}{P\cap \ann P}.$$
Then the subgraph of $\zt(R)$ induced by vertices in $I$ is a complete graph.

\end{Lemma}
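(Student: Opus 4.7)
The plan is to prove the stronger fact that $I^2 = \{0\}$ and $I \subseteq Z(R)$. These two statements together give the claim at once: for any two distinct nonzero elements $u, v \in I$ (which are precisely the vertices of $\zt(R)$ lying in $I$, once $I \subseteq Z(R)$ is known), one has $uv \in I^2 = \{0\}$ and $u+v \in I \subseteq Z(R)$, so $u$ and $v$ are adjacent in $\zt(R)$.

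To show $I^2 = \{0\}$, I would use that each $P \cap \ann P$ is already an ideal, so $I = \sum_{P \in \ass(R)}(P \cap \ann P)$, and it suffices to verify $st = 0$ for every $s \in P \cap \ann P$ and $t \in Q \cap \ann Q$ with $P, Q \in \ass(R)$. The case $P = Q$ is immediate, since $s \in \ann P$ and $t \in P$ give $st \in (\ann P) \cdot P = \{0\}$. For $P \neq Q$ I would invoke the Artinian decomposition $R \cong R_1 \times \cdots \times R_k$ into local rings exactly as in the proof of Lemma \ref{lemma:product}: every prime, hence every associated prime, of $R$ has the form $R_1 \times \cdots \times \mathfrak{m}_j \times \cdots \times R_k$ for a unique coordinate $j$, and a direct computation then shows that $P \cap \ann P$ is supported only at that coordinate. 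Since $P$ and $Q$ correspond to different coordinates, the supports of $s$ and $t$ are disjoint, and $st = 0$ follows componentwise.

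Once $I^2 = \{0\}$ is in hand, $I \subseteq Z(R)$ comes for free: every $a \in I$ satisfies $a^2 = 0$, so $a \in Z(R)$. The main obstacle is the case $P \neq Q$ above, where the product structure of an Artinian ring is essential in order to see that $P \cap \ann P$ and $Q \cap \ann Q$ are carried by disjoint coordinates; everything else is a direct consequence of the definition of $\zt(R)$.
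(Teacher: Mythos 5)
Your argument is correct, and its skeleton --- reduce the claim to $I^2=\{0\}$ together with $I\subseteq Z(R)$, reduce $I^2=\{0\}$ to pairwise products of generators, and split into the cases $P=Q$ and $P\neq Q$ --- is the same as the paper's. Where you diverge is in the execution of the case $P\neq Q$. The paper routes it through Lemma~\ref{lemma:product}: writing $P=\ann(a_1)$ and $Q=\ann(a_2)$, it first obtains $a_1a_2=0$ and then chases annihilator inclusions ($a_1\in\ann(a_2)\subseteq\ann(s_2)$, hence $a_1s_2=0$, hence $s_2\in\ann(a_1)\subseteq\ann(s_1)$, hence $s_1s_2=0$). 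You instead compute directly in the decomposition $R\cong R_1\times\cdots\times R_k$ into Artinian local rings: each prime is $R_1\times\cdots\times\mathfrak{m}_j\times\cdots\times R_k$ for a unique coordinate $j$, so $\ann P$ --- and therefore $P\cap\ann P$ --- vanishes outside coordinate $j$, and distinct associated primes live on distinct coordinates, making the product zero componentwise. This is a clean, self-contained alternative that bypasses Lemma~\ref{lemma:product} entirely (while relying on the same structural input, the coordinate form of primes in an Artinian ring, which is established inside that lemma's proof); your handling of $P=Q$ via $(\ann P)\cdot P=\{0\}$ is likewise more direct than the paper's double-annihilator phrasing. You also make explicit two points the paper leaves implicit: that the vertices of $\zt(R)$ lying in $I$ are exactly the nonzero elements of $I$ (since $a^2=0$ forces $a\in Z(R)$), and that $u+v\in Z(R)$ because $u+v\in I\subseteq Z(R)$, whereas the paper infers this from $(u+v)^2=0$. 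Both routes are valid; yours trades the annihilator gymnastics for an explicit coordinate computation.
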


\begin{proof}
Let us first show that $S^2=0$. Take arbitrary  
$s_1 \in P_1\cap \ann P_1$ and $s_2 \in P_2\cap \ann P_2$ for some 
$P_1,P_2 \in \ass(R)$. Thus there exist $a_1,a_2 \in R$, such that $P_i=\ann (a_i)$, $i=1,2$, 
and so  $s_i \in \ann (a_i) \cap \ann (\ann (a_i))$. It follows that $s_i^2=0$ and 
$\ann(a_i) \subset \ann(s_i)$, $i=1,2$.

If $P_1=P_2$, then $s_1,s_2 \in \ann (a) \cap \ann (\ann (a))$ and therefore $s_1s_2=0$.
If $P_1$ and $P_2$ are distinct, it follows by Lemma \ref{lemma:product} that $a_1a_2=0$.
This implies that $a_1\in \ann (a_2)\subset \ann (s_2)$, so $a_1s_2=0$. Thus,
$s_2 \in \ann (a_1)\subset \ann (s_1)$ and so $s_1s_2=0$. 

If  $I$ is the ideal, generated by $S$, notice that $S^2=0$ gives us $I^2=0$. 
Hence, $xy=0$  and  $(x+y)^2=0$ for any $x,y \in I$ and thus the vertices from $I$ form
a clique in $\zt(R)$.
\end{proof}

%
%

\begin{Lemma}\label{notE-conn}
Let $R$ be an Artinian ring. If $P \cap \ann P\ne \{0\}$ for each maximal associated prime ideal
$P$ of $R$, then $\zt(R)$ is connected with $\diam{\zt(R)}\leq 3$.
\end{Lemma}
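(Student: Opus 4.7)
The plan is to produce, via Lemma \ref{lemma:clique}, a complete subgraph $K$ of $\zt(R)$ that meets every associated prime of $R$, and then to show that every vertex of $\zt(R)$ lies at distance at most $1$ from $K$.

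First I would observe that in an Artinian ring every prime ideal is maximal, so every associated prime of $R$ is automatically a maximal associated prime. The hypothesis therefore upgrades to: $P \cap \ann P \neq \{0\}$ for every $P \in \ass(R)$. Invoking Lemma \ref{lemma:clique}, the set $S = \bigcup_{P \in \ass(R)} (P \cap \ann P)$ generates an ideal $I$ whose non-zero elements form a clique $K$ in $\zt(R)$, and the hypothesis guarantees $K$ is non-empty.

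The key step is to show that each non-zero zero-divisor $u$ is at distance at most $1$ from $K$. Since $R$ is Artinian (hence Noetherian), equation \eqref{Zunion} yields some $P \in \ass(R)$ with $u \in P$. I would then pick any $s \in (P \cap \ann P) \setminus \{0\}$ and verify the two adjacency conditions for $\zt(R)$: the product $us$ vanishes because $s \in \ann P$ and $u \in P$, while the sum $u + s$ lies in $P \subseteq Z(R)$. Hence either $u = s \in K$, or $u$ and $s$ are distinct and adjacent in $\zt(R)$.

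To conclude, for arbitrary non-zero zero-divisors $u, v$ I would pick corresponding $s_u, s_v \in K$ as above. Completeness of $K$ makes $s_u$ and $s_v$ equal or adjacent, yielding a walk from $u$ to $v$ via $s_u$ and $s_v$ of length at most $3$. I do not anticipate a serious obstacle; the only minor subtlety is that $u + s$ could equal $0$, but the definition of $\zt(R)$ requires only $u + v \in Z(R)$, and $0 \in Z(R)$, so this does not interfere with adjacency.
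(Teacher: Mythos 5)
Your proposal is correct and follows essentially the same route as the paper: pick, for each vertex $u$, an associated prime $P$ containing it, join $u$ to a nonzero element of $P\cap\ann P$, and link these elements through the clique of Lemma \ref{lemma:clique}, giving paths of length at most $3$. Your preliminary observation that in an Artinian ring every associated prime is maximal (so the hypothesis applies to all of $\ass(R)$) replaces the paper's step of enlarging $P'$ to a maximal associated prime, and your explicit handling of the degenerate cases $u=s$ and $u+s=0$ is a small but welcome addition.
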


\begin{proof}
Take any nonzero $x\in Z(R)$. By \eqref{Zunion} there exists $P'\in \ass(R)$, such that
$x \in P'$. Take maximal $P\in \ass(R)$ that includes $P'$. 
By assumption $P \cap \ann P\ne \{0\}$ there exists $z_x \in P \cap \ann P$, which implies $xz_x=0$.
Since $P$ is an ideal, we have $x+z_x \in P \subset Z(R)$, and thus $x \sim z_x$ is an
edge in $\zt(R)$. By Lemma \ref{lemma:clique} vertex $z_x$  is a vertex of a clique in 
$\zt(R)$. Therefore, for any distinct nonzero $x,y\in Z(R)$, there exists
a path $x \sim z_x \sim z_y \sim y$ in $\zt(R)$ and the result follows.
\end{proof}

Recall that a ring $R$ is Artinian if and only if it is Noetherian and all the prime ideals in $R$ are maximal \cite[Theorem~2.14]{MR1322960}. Hence, as a corollary of Lemma \ref{E-notconn} and Lemma \ref{notE-conn}, we have the following.

\begin{Theorem}\label{thm:iff}
For an Artinian ring $R$, graph $\zt(R)$ is connected if and only if $P \cap \ann P \neq \{0\}$ for all maximal associated  prime ideals $P \in \ass(R)$.
\end{Theorem}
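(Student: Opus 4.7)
The plan is to assemble Theorem \ref{thm:iff} directly from the two preceding lemmas, using the characterization of Artinian rings just recalled before the statement: every Artinian ring is Noetherian (so Lemma \ref{E-notconn} applies) and all its prime ideals are maximal.

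For the easier direction ($\Leftarrow$), I would simply invoke Lemma \ref{notE-conn}: if $P \cap \ann P \neq \{0\}$ for every maximal associated prime $P$ of the Artinian ring $R$, then $\zt(R)$ is connected (with diameter at most $3$).

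For the other direction ($\Rightarrow$), I would argue by contrapositive. Assume there is a maximal associated prime $P$ of $R$ with $P \cap \ann P = \{0\}$. Since $R$ is Artinian it is Noetherian, so if in addition $Z(R) \neq \{0\}$, Lemma \ref{E-notconn} immediately yields that $\zt(R)$ is disconnected.

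The one point requiring a little care is the degenerate case $Z(R)=\{0\}$, which is exactly where Lemma \ref{E-notconn} cannot be invoked. But in this case $R$ is an Artinian domain, hence a field, so by \eqref{Zunion} we have $\ass(R)=\{(0)\}$ and the unique maximal associated prime is $P=(0)$, for which $P\cap\ann P = \{0\}$ automatically; simultaneously $\zt(R)$ is the empty graph, which is not considered connected under the convention adopted here (the graph has no vertices at all, so no pair of vertices is joined by a path). The equivalence therefore still holds. Apart from dispensing with this edge case, I expect no real obstacle: the two lemmas are exactly tailored to the two implications, and the statement is essentially a repackaging of their conclusions.
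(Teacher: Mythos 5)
Your proof is correct and takes essentially the same route as the paper, which obtains the theorem directly as a corollary of Lemma \ref{E-notconn} and Lemma \ref{notE-conn} together with the fact that an Artinian ring is Noetherian with every prime ideal maximal. Your explicit treatment of the degenerate case $Z(R)=\{0\}$ (where $R$ is a field, the unique maximal associated prime is $(0)$ and $\zt(R)$ is empty) is a point the paper passes over in silence, and your resolution is fine under the stated convention that the empty graph is not connected.
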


In the case graph $\zt(R)$ is connected, it is possible to classify $R$ with respect to diameter of
$\zt(R)$.

\begin{Theorem}\label{thm:diam}
Let $R$ be an Artinian ring, such that $\zt(R)$ is connected and not empty. Then
\begin{enumerate}
\item If $R$ is not local, then $\diam{\zt(R)}=3$,
\item If $R$ is local and $\mathfrak{m} \ideal R$ is its unique maximal ideal with nilindex $n \geq 3$, then $\diam{\zt(R)}=2$,
\item If $R$ is local and $\mathfrak{m} \ideal R$ is its unique maximal ideal such that $\mathfrak{m}^2=0$, $\mathfrak{m}\neq\{0\}$, then $\diam{\zt(R)}=1$.
\end{enumerate}
\end{Theorem}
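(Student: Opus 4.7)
My approach handles the three cases in order of increasing subtlety. In case (3), $R$ is local Artinian, so $Z(R)=\mathfrak{m}$ and the vertex set is $\mathfrak{m}\setminus\{0\}$; for any two distinct vertices $u,v$ we have $uv\in\mathfrak{m}^2=0$ and $u+v\in\mathfrak{m}=Z(R)$, so $u\sim v$. Hence $\zt(R)$ is complete and its diameter is $1$.

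For case (2), I first produce a universal vertex. Theorem \ref{thm:iff} applied to the unique maximal associated prime $\mathfrak{m}$ yields a nonzero $z\in\mathfrak{m}\cap\ann\mathfrak{m}$, and for every vertex $x\ne z$ we get $xz=0$ together with $x+z\in\mathfrak{m}\subseteq Z(R)$, so $x\sim z$ and $\diam{\zt(R)}\le 2$. For the matching lower bound I need two non-adjacent vertices, i.e.\ two distinct nonzero elements of $\mathfrak{m}$ whose product is nonzero. Since the nilindex is at least $3$, $\mathfrak{m}^2\ne 0$, so there exist $a,b\in\mathfrak{m}$ with $ab\ne 0$. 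If $a\ne b$, this pair works; otherwise $a^2\ne 0$, and I replace $b$ by $a+z$. Then $a+z\ne a$ (as $z\ne 0$), $a+z\ne 0$ (else $a=-z\in\ann\mathfrak{m}$, forcing $a^2=0$), and $a(a+z)=a^2+az=a^2\ne 0$.

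For case (1), I decompose $R=R_1\times\cdots\times R_k$ with $k\ge 2$ Artinian local factors. A quick computation shows that whenever some $R_i$ is a field, the maximal associated prime $P_i=R_1\times\cdots\times\{0\}\times\cdots\times R_k$ (with the zero in slot $i$) satisfies $P_i\cap\ann P_i=\{0\}$, so connectivity and Theorem \ref{thm:iff} force $\mathfrak{m}_i\ne 0$ for every $i$. The upper bound $\diam{\zt(R)}\le 3$ is Lemma \ref{notE-conn}. For the lower bound I take $x=(1,0,\ldots,0)$ and $y=(0,1,\ldots,1)$; they are nonzero zero-divisors, $x+y=1\notin Z(R)$ so $x\not\sim y$, and any common neighbor $w$ would satisfy $w_1=0$ (from $xw=0$) and $w_i=0$ for $i\ge 2$ (from $yw=0$), forcing $w=0$. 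Hence the distance between $x$ and $y$ is at least $3$, yielding equality.

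The main obstacle is the lower bound in case (2): the naive choice of a non-adjacent pair $(a,a^2)$ fails when $a^3=0$, and the perturbation $b=a+z$ with $z\in\mathfrak{m}\cap\ann\mathfrak{m}$ is the key trick, exploiting exactly the hypothesis that guarantees connectivity. The remaining verifications in case (1) and case (3) reduce to bookkeeping with coordinates and with the ideal $\mathfrak{m}$.
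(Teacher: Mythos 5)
Your proof is correct and follows essentially the same route as the paper: the same witnesses $(1,0,\ldots,0)$ and $(0,1,\ldots,1)$ with no common neighbour in the non-local case, a universal vertex lying in $\mathfrak{m}\cap\ann\mathfrak{m}$ for the upper bound in case (2) (the paper takes it in $\mathfrak{m}^{n-1}$, which sits inside that intersection), and completeness when $\mathfrak{m}^2=0$. Your perturbation $b=a+z$ for the case-(2) lower bound is a slightly cleaner way to dispose of the degenerate subcase where all distinct pairs multiply to zero, which the paper instead settles by a longer chain of contradictions.
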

\begin{proof}
If $R$ is  an Artinian ring that is not local, then $R=R_1\times R_2 \times \dots \times R_n$, where $k\geq 2$ and $R_1,R_2,...,R_k$ are local rings. 
Denote $x=(0,1,1,\ldots,1)$ and $y=(1,0,0,\ldots,0)$. Note that $x$ and $y$ are not adjacent, since $x+y=(1,1,\ldots,1) \notin Z(R)$. Furthermore, if $(a_1,a_2,\ldots,a_k) \in N(x) \cap N(y)$, then $a_i=0$, $i=1,2,\ldots,k$, 
but $(0,0,\ldots,0) \notin V(\zt(R))$. Therefore 
$d(x,y) \geq 3$ and by Theorem \ref{notE-conn} it follows that $\diam{\zt(R)}=3$.

Suppose now $R$ is local Artinian with unique maximal ideal $\mathfrak{m}$. Then $Z(R)=\mathfrak{m}$ and so $V(\zt(R))=\mathfrak{m} \setminus \{0\}$.
In the case $\mathfrak{m}^2=\{0\}$, we have $x+y \in Z(R)$ and $xy=0$ for any $x,y \in \mathfrak{m}$. Therefore $\diam{\zt(R)}=1$ and nonzero elements of $\mathfrak{m}$ form a clique. 

Suppose now, there exists nonzero $u\in \mathfrak{m}^{n-1}$ and $\mathfrak{m}^{n}=0$ for some $n \geq 3$. 
Thus, for any nonzero  $x \in \mathfrak{m} \setminus \{u\}$, we have $xu=0$ and $x+u \in \mathfrak{m}$, so $x$ is adjacent to $u$ in $\zt(R)$. 
Hence, $\diam{\zt(R)} \leq 2$. We would like to 
show that $\zt(R)$ is not complete graph. 
Since $n \geq 3$, there exist $v,w \in \mathfrak{m}$ such that $vw \neq 0$. If $v \ne w$, then $v \nsim w$ and thus  $\diam{\zt(R)} = 2$. 
Otherwise, if $vw=0$ for every two distinct $v,w \in \mathfrak{m}$, there exists $u \in \mathfrak{m}$ such 
that $u^2\neq0$. This implies $u(u+w)=u^2\neq0$ for every $w \in \mathfrak{m}$. 
If $u+w$ is nonzero and distinct from $u$, it implies $u \nsim u+w$, hence $\diam{\zt(R)} = 2$. 
In the case $u+w=0$ for some $w \in \mathfrak{m}$, we have $u^2=0$,
a contradiction. Therefore, for every $w \in \mathfrak{m}$ we have $u+w=u$, which implies $\mathfrak{m}=\{0,u\}$ and so  $u^2=u$. This contradicts the fact that $u^n=0$.
\end{proof}

We can easily apply Theorems \ref{thm:iff} and \ref{thm:diam} to the case $R=\ZZ_m$.

\begin{Corollary}\label{prop:connected--iff}
The total zero-divisor graph  $\zt\left(\ZZ_{m}\right)$ 
is connected for $m=p_{1}^{m_{1}}p_{2}^{m_{2}}\cdots p_{n}^{m_{n}}$, where 
$p_{1}<p_{2}<\cdots<p_{n}$ are prime numbers, if and only if  $m_{i}\geq2$ for $i=1,2,\ldots,n$.

Moreover, if $\zt\left(\ZZ_{m}\right)$
is connected, then
\begin{enumerate}
\item $\diam{\zt\left(\ZZ_{m}\right)}=3$ if 
$n\geq2$,
\item $\diam{\zt\left(\ZZ_{m}\right)}=2$ if 
$m=p^{k}$, $k\geq3$,
\item \label{complete} $\diam{\zt\left(\ZZ_{m}\right)}=1$ 
and $\zt\left(\ZZ_m\right)= K_{p-1}$ if 
$m=p^{2}$.  
\end{enumerate}
\end{Corollary}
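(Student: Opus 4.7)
The plan is to specialize Theorems \ref{thm:iff} and \ref{thm:diam} to $R=\ZZ_m$ after making the associated primes and their annihilators explicit. By the Chinese Remainder Theorem,
\[
\ZZ_m \;\cong\; \ZZ_{p_1^{m_1}} \times \cdots \times \ZZ_{p_n^{m_n}}
\]
is a product of local Artinian rings, so the analysis of prime ideals in a product of local rings carried out inside the proof of Lemma \ref{lemma:product} identifies the prime ideals of $\ZZ_m$ as
\[
P_i \;=\; \ZZ_{p_1^{m_1}} \times \cdots \times (p_i) \times \cdots \times \ZZ_{p_n^{m_n}}, \qquad i=1,\ldots,n.
\]
Each $P_i$ is maximal, and taking $a_i$ to be the element with $p_i^{m_i-1}$ in the $i$-th coordinate and $0$ elsewhere gives $P_i = \ann(a_i)$, so $\ass(\ZZ_m)=\{P_1,\ldots,P_n\}$ and every associated prime is automatically a maximal associated prime.

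Next I would compute $\ann P_i$ coordinate by coordinate. Since $\ann(p_i) = (p_i^{m_i-1})$ in $\ZZ_{p_i^{m_i}}$,
\[
P_i \cap \ann P_i \;=\; 0 \times \cdots \times \bigl((p_i)\cap (p_i^{m_i-1})\bigr) \times \cdots \times 0.
\]
If $m_i = 1$, then $(p_i)$ is the zero ideal of $\ZZ_{p_i}$ and this intersection is trivial; if $m_i \geq 2$, then $p_i^{m_i-1}$ is a nonzero element lying in both $(p_i)$ and $(p_i^{m_i-1})$. Hence $P\cap \ann P\neq\{0\}$ for every maximal associated prime $P$ of $\ZZ_m$ if and only if $m_i\geq 2$ for all $i$, and Theorem \ref{thm:iff} yields the connectedness criterion.

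For the diameter classification I work under the standing assumption that each $m_i\geq 2$ and apply Theorem \ref{thm:diam} case by case. If $n\geq 2$, the product decomposition shows that $\ZZ_m$ is not local, so item~(1) gives $\diam{\zt(\ZZ_m)}=3$. If $m=p^k$, then $\ZZ_m$ is local with maximal ideal $\mathfrak{m}=(p)$, whose nilindex equals $k$ because $p^{k-1}\neq 0$ while $p^k=0$. For $k\geq 3$ item~(2) gives diameter $2$, and for $k=2$ we have $\mathfrak{m}^2=0$ with $\mathfrak{m}\neq\{0\}$, so item~(3) gives diameter $1$; since the vertex set $\{p,2p,\ldots,(p-1)p\}$ has exactly $p-1$ elements and they form a clique, $\zt(\ZZ_{p^2})\cong K_{p-1}$. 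The main bookkeeping is the coordinate-wise identification of $P_i\cap\ann P_i$; there is no conceptual obstacle beyond resisting the temptation to conflate $(p_i)\ideal \ZZ_{p_i^{m_i}}$ with its preimage in $\ZZ_m$ and remembering that the nilindex of $(p)\ideal \ZZ_{p^k}$ is $k$, not $k-1$.
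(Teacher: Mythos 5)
Your proposal is correct and follows exactly the route the paper intends: the paper states this result as an immediate consequence of Theorems \ref{thm:iff} and \ref{thm:diam} without writing out the verification, and your coordinate-wise computation of $\ass(\ZZ_m)$ and of $P_i\cap\ann P_i$ via the decomposition $\ZZ_m\cong\ZZ_{p_1^{m_1}}\times\cdots\times\ZZ_{p_n^{m_n}}$ supplies precisely the missing bookkeeping. The case analysis for the diameter (non-local for $n\geq 2$, nilindex $k$ of $(p)\ideal\ZZ_{p^k}$ for the local cases) is also correct.
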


\bigskip
\section{The total zero-divisor graph of $\ZZ_m$}\label{Zm}

\bigskip

If not stated differently, an integer $m$ will be factorized as 
$$m=p_{1}^{m_{1}}p_{2}^{m_{2}}\cdots p_{n}^{m_{n}},$$
where $p_{1}<p_{2}<\cdots<p_{n}$ are prime numbers.
%

We will often make a use of the following lemma, which follows  from Corollary \ref{prop:connected--iff}
in the case $R=\ZZ_m$.

\begin{Lemma}\label{thm:cliqueZm}
If the total zero-divisor graph $\zt\left(\ZZ_{m}\right)$ is connected, the vertices in  $\zt\left(\ZZ_{m}\right)$ of the form 
 $$r \prod_{i=1}^n p_i^{\left\lceil \frac{m_i}{2}\right\rceil},$$ 
where $1 \leq r \leq p_1^{\left\lfloor \frac{m_1}{2}\right\rfloor} \dots p_n^{\left\lfloor \frac{m_n}{2}\right\rfloor}-1$, 
form a clique  of size 
$p_1^{\left\lfloor \frac{m_1}{2}\right\rfloor} \dots p_n^{\left\lfloor \frac{m_n}{2}\right\rfloor}-1$.
\end{Lemma}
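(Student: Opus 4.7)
Write $a=\prod_{i=1}^{n}p_i^{\lceil m_i/2\rceil}$ and $b=\prod_{i=1}^{n}p_i^{\lfloor m_i/2\rfloor}$, so that $ab=m$. The candidate clique consists of the residues $ra\pmod{m}$ for $1\le r\le b-1$. Since $\zt(\ZZ_m)$ is assumed connected, Corollary \ref{prop:connected--iff} gives $m_i\ge 2$ for every $i$, hence $a\ge p_1\ge 2$ and $b\ge p_1\ge 2$; in particular $a,b$ are proper divisors of $m$, and there is at least one vertex on the list.

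My plan is to verify in turn the three things that need checking: that the listed residues are distinct, that each is a nonzero zero-divisor (so is actually a vertex of $\zt(\ZZ_m)$), and that any two of them are adjacent. Distinctness: if $ra\equiv sa\pmod{m}$ with $1\le r,s\le b-1$, then $ab=m$ divides $(r-s)a$, hence $b\mid r-s$, forcing $r=s$. Each $ra$ is nonzero in $\ZZ_m$ because $0<ra<ba=m$, and it is a zero-divisor because $(ra)\cdot b=r\cdot ab\equiv 0\pmod{m}$ while $b\not\equiv 0\pmod{m}$ (as $b<m$). Thus the list produces $b-1=\prod_{i}p_i^{\lfloor m_i/2\rfloor}-1$ distinct vertices.

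For adjacency, pick $1\le r\neq s\le b-1$. The crucial observation is $2\lceil m_i/2\rceil\ge m_i$ for every $i$, so
\[
a^{2}=\prod_{i=1}^{n}p_i^{2\lceil m_i/2\rceil}
\]
is divisible by $m$. Consequently $(ra)(sa)=rs\,a^{2}\equiv 0\pmod{m}$, giving the first adjacency condition. For the second, $ra+sa=(r+s)a$ lies in the principal ideal $(a)\subseteq\ZZ_m$, and every element of $(a)$ is a zero-divisor: indeed, any $ka\in(a)$ satisfies $(ka)\cdot b=k\,ab\equiv 0\pmod{m}$ with $b\not\equiv 0$. Hence $ra+sa\in Z(\ZZ_m)$, and the vertices $ra$ form a clique of the claimed size.

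There is no real obstacle here; the proof is entirely bookkeeping, and the single substantive input is the inequality $2\lceil m_i/2\rceil\ge m_i$, which is what makes $m\mid a^{2}$ and thereby forces the products to vanish. The only point that needs a moment of care is to argue that $a$ and $b$ are simultaneously $\ge 2$ (so that $(a)$ is a proper nonzero ideal and the listed residues really are nonzero), and for this one just invokes the hypothesis via Corollary \ref{prop:connected--iff}.
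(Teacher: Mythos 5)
Your proof is correct and is exactly the routine verification the paper leaves implicit: the paper states the lemma without proof, remarking only that it "follows from Corollary \ref{prop:connected--iff}," and your argument (distinctness via $b\mid r-s$, membership in $Z(\ZZ_m)$ via multiplication by $b$, and adjacency via $m\mid a^2$ together with $(r+s)a\in(a)\subseteq Z(\ZZ_m)$) is the intended computation. No issues.
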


From this fact we can compute some other parameters of the total zero-divisor graph.

\begin{Lemma}
Graph $\zt\left(\ZZ_{m}\right)$ is acyclic if $m$ is  equal to $2^{2}$, $3^{2}$,  $2^{3}$, $m=2^{2}p$ 
or $m=pq$, for any distinct prime numbers $p,q$. 
Otherwise, $\textrm{girth}\zt\left(\ZZ_{m}\right)=3$.
\end{Lemma}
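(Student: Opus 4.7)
The plan is to prove the two directions separately.

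For the acyclic direction, I describe $\zt(\ZZ_m)$ explicitly in each listed case. By Corollary \ref{prop:connected--iff}(3), $\zt(\ZZ_4)=K_1$ and $\zt(\ZZ_9)=K_2$, both acyclic. For $m=8$, the vertex set $\{2,4,6\}$ yields only the edges $2\sim 4$ and $4\sim 6$, a path. For $m=pq$, a short case analysis according to whether each nonzero zero-divisor lies in $(p)$ or $(q)$ shows that no pair $x,y$ can simultaneously satisfy $xy\equiv 0$ and $x+y\in Z(\ZZ_{pq})$, so the graph has no edges. For $m=4p$ with $p$ odd, the same type of argument shows that the only edges emanate from $2p$ and go to each other even zero-divisor while $p,3p$ are isolated; the result is a star plus two isolated vertices.

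For the girth-3 direction, the main tool is Lemma \ref{thm:cliqueZm}: the multiples of $\prod_i p_i^{\lceil m_i/2\rceil}$ form a clique of size $t:=\prod_i p_i^{\lfloor m_i/2\rfloor}-1$, and the same construction yields a clique even when $\zt(\ZZ_m)$ is disconnected. Whenever $t\geq 3$ this gives a triangle immediately. Enumeration shows $t\leq 2$ only when $m\in\{4,8,9,27\}$, or $m\in\{pq,4q,8q,9q,27q,18,54\}$ with $q$ a suitable prime, or $m$ has all exponents equal to $1$ with at least three distinct prime divisors. Subtracting the acyclic list $\{4,8,9,4p,pq\}$, the cases still needing an explicit triangle are $m=27$, the two-prime remainders $8q,9q,27q,18,54$, and every $m$ with at least three distinct prime divisors.

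For each such residual $m$ I exhibit an explicit triangle. For $m$ with three distinct prime divisors $p_i,p_j,p_k$, the triple $\{m/p_i,m/p_j,m/p_k\}$ is a triangle since each pairwise product equals $m\cdot m/(p_ip_j)$ and each pairwise sum equals the nontrivial-divisor multiple $(m/(p_ip_j))(p_i+p_j)$. For $m=p^kq$ with odd prime $p\geq 3$, $k\geq 2$, and $q\neq p$ prime, the triple $\{p,\,p^{k-1}q,\,2p^{k-1}q\}$ handles $m=9q,27q$. For $m=2^kq$ with $k\geq 3$ and $q$ odd prime, the triple $\{2^{k-1},\,2q,\,2^{k-1}q\}$ handles $m=8q$. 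For $m=2q^l$ with $l\geq 2$ and $q$ odd prime, the triple $\{q^l,\,2q^{l-1},\,4q^{l-1}\}$ handles $m=18,54$. Finally $m=27$ is handled by $\{3,9,18\}$. In each case the three triangle conditions (distinctness in the vertex set, pairwise products divisible by $m$, pairwise sums with nontrivial gcd with $m$) reduce to elementary divisibility checks.

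The main obstacle is the case analysis itself: there is no uniform triangle valid for every non-exceptional $m$, so the proof becomes a patchwork indexed by the factorization of $m$. One must be attentive to degeneracies at small-parameter boundaries — verifying, for instance, that $2q<m$ when $m=2^kq$, or that the three candidate vertices in a triple like $\{q^l,2q^{l-1},4q^{l-1}\}$ are genuinely pairwise distinct modulo $m$ — and to the fact that each pairwise product must vanish modulo $m$, not merely modulo a proper divisor of $m$.
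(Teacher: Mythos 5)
Your proof is correct and follows essentially the same route as the paper: the clique of multiples of $\prod_i p_i^{\lceil m_i/2\rceil}$ supplies a triangle whenever it has at least three vertices, the listed exceptional values of $m$ are analysed explicitly, and the remaining two-prime and many-prime cases are settled by explicit triangles (yours differ from the paper's only in the choice of representatives). One small inaccuracy: your enumeration of the $m$ with $t\le 2$ omits values such as $m=2^2\cdot 3\cdot 5$ (three primes, not all exponents equal to $1$), but this is harmless because you exhibit the triangle $\{m/p_i,\,m/p_j,\,m/p_k\}$ for \emph{every} $m$ with at least three distinct prime divisors, so all cases are still covered.
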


\begin{proof} By Lemma \ref{thm:cliqueZm} we have that $\textrm{girth}\zt\left(\ZZ_{m}\right)=3$ if the graph is connected and $m$ distinct from $2^2$, $2^3$, $3^2$ and $3^3$.
Note that $\zt\left(\ZZ_{2^2}\right)=K_1$,  $\zt\left(\ZZ_{3^2}\right)=K_2$, $\zt\left(\ZZ_{2^3}\right)=P_3$, which are all acyclic. In $\zt\left(\ZZ_{3^3}\right)$ we have
$3 \sim 9 \sim 18 \sim 3$ and so $\textrm{girth}\zt\left(\ZZ_{3^3}\right)=3$.

If $p$ and $q$ are distinct primes, $m=pq$, then $\zt\left(\ZZ_{pq}\right)$ is an empty graph with $p+q-2$ vertices.
In the case $m=2^{2}p$, observe that every edge in the graph is incident with $2p$, hence  $\zt\left(\ZZ_{2^2p}\right)$ is acyclic.
However, if $m=p^{m_p}q^{m_q}$, where  $m_q \geq 2$, and  $m\ne 2^{2}p$, then $q^{m_{q}-1} \sim \frac{m}{q} \sim \left(q-1\right)\frac{m}{q} \sim q^{m_{q}-1}$ if $q>2$
and  $2^{m_{q}-1} \sim \frac{m}{2} \sim \frac{m}{2^{2}} \sim 2^{m_{q}-1}$ if $q=2$. In both cases, $\textrm{girth}\zt\left(\ZZ_{p^{m_p}q^{m_q}}\right)=3$.
If $n\geq3$, then $\frac{m}{p_i}  \sim \frac{m}{p_j}$ for all $1 \leq i,j \leq n$ and so $\textrm{girth}\zt\left(\ZZ_{m}\right)=3$.
\end{proof}

\begin{Corollary}\label{acyclic}
 The only acyclic total zero-divisor graphs of rings $\ZZ_m$ are $P_1$, $P_2$, $P_3$, $K_{1,2p-2}\cup 2K_1$ and $(p+q-2) K_1$ for any distinct prime numbers $p$ and $q$.
\end{Corollary}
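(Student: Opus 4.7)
By the preceding Lemma, the acyclic total zero-divisor graphs of $\ZZ_m$ arise exactly from $m\in\{4,8,9\}$, $m=4p$, or $m=pq$ for distinct primes $p,q$. The plan is to exhibit the graph in each case by direct application of the adjacency criterion $u\sim v\iff uv\equiv 0\pmod m$ and $u+v\in Z(\ZZ_m)$.

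The cases $m\in\{4,8,9\}$ involve vertex sets of size at most three and a short enumeration yields $P_1$, $P_3$, $P_2$ respectively. For $m=pq$, the $p+q-2$ nonzero zero-divisors are the nonzero multiples of $p$ together with those of $q$; I would show no two of them form an edge. Two multiples $ap,bp$ of the same prime fail the product condition, since $abp^2\equiv 0\pmod{pq}$ would force $q\mid ab$, impossible because $a,b<q$; a mixed pair $ap,bq$ passes the product condition but fails the sum condition, since $ap+bq$ is coprime to both $p$ and $q$ and hence a unit. Thus $\zt(\ZZ_{pq})=(p+q-2)K_1$.

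The main case is $m=4p$ for an odd prime $p$. I would appeal to the observation from the preceding Lemma that every edge of $\zt(\ZZ_{4p})$ is incident to $2p$, so the graph must be a star centered at $2p$ together with isolated vertices. A short check shows the neighbors of $2p$ are precisely the $2p-2$ even residues distinct from $0$ and $2p$: the product condition $2pu\equiv 0\pmod{4p}$ is equivalent to $u$ even, and for such $u\ne 2p$ the sum $u+2p$ is even and nonzero, hence a zero-divisor. It remains to show the two odd nonzero zero-divisors $p$ and $3p$ are isolated. For any potential neighbor $u$ of $p$ (or $3p$) the product condition forces $4\mid u$, so $u=4k$ with $1\le k\le p-1$; but then $p+u$ (respectively $3p+u$) is odd, and $p\nmid k$ implies $\gcd(p+u,4p)=1$, so $p+u\notin Z(\ZZ_{4p})$. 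This yields $\zt(\ZZ_{4p})=K_{1,2p-2}\cup 2K_1$.

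The only mild obstacle is this bookkeeping in the $m=4p$ case, where both the star structure and the isolation of $p$ and $3p$ must be checked; the remaining cases are immediate from the adjacency definition and the previous Lemma.
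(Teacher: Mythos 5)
Your proposal is correct and follows essentially the same route as the paper: the paper reads the corollary off the case analysis already carried out in the proof of the preceding Lemma, which identifies $\zt(\ZZ_{4})=K_1$, $\zt(\ZZ_{9})=K_2$, $\zt(\ZZ_{8})=P_3$, $\zt(\ZZ_{pq})=(p+q-2)K_1$, and the fact that every edge of $\zt(\ZZ_{4p})$ is incident with $2p$. Your only addition is the explicit (and correct) determination of $N(2p)$ and the isolation of $p$ and $3p$ in the $m=4p$ case, which the paper leaves implicit.
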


\begin{question}
 Theorems \ref{thm:iff} and \ref{thm:diam} give us some necessary conditions for a graph $G$ to be the total zero-divisor graph of a Noetherian and Artinian ring $R$. 
 Is it possible to classify all acyclic total zero-divisor graphs of such rings $R$? Or, more generally, is it possible to classify all graphs that are the
 total zero-divisor graph of a Noetherian and Artinian ring $R$?
\end{question}

\bigskip
\subsection{Associated elements in $\ZZ_m$}

We say that two elements $a,b \in \ZZ_{m}$ are \emph{associated} if $b=au$ for some invertible element 
$u \in \UU(\ZZ_{m})$. We name the set of all $\{au; \; u \in  \UU(\ZZ_m)\}$ the associatedness class of $a$ in $\ZZ_m$.
Note that for associated zero-divisors $a$ and $b$, $N(a)\setminus \{b\}=N(b)\setminus \{a\}$ holds, hence associated vertices are indistinguishable. 
While in the zero-divisor graph the only way for vertices to be indistinguishable is to be associated, in the total zero-divisor graph there are vertices that are indistinguishable even though they are not associated. Namely, $p_i ^{m_i}$ and $p_i^{m_i-1}$ where $m_i \geq 2$, are such vertices.

We use the notation $w|_{m}v$ to denote that $w$ is a divisor of $v$ in $\ZZ_{m}$, i.e. there exists $x \in \ZZ_{m}$ such that $wx \equiv v$ modulo $m$.
The symbol "$\equiv$'' denotes congruence modulo $m$, i.e. equality in ring $\ZZ_{m}$.

\begin{Lemma}
\label{lem:associated--divisor}Every associatedness class of $\ZZ_{m}$
contains exactly one positive divisor of $m$.\end{Lemma}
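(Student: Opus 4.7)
The plan is to prove existence constructively by exhibiting the canonical positive divisor $d = \gcd(\tilde a, m)$ in each class, and to prove uniqueness by showing that $\gcd(\cdot, m)$ is an invariant of the associatedness class. The class of $0 \in \ZZ_m$ is handled by observing that $\{0\}$ contains the divisor $m$ (viewed as $0 \pmod m$), so I may assume henceforth that $a \neq 0$ with integer representative $\tilde a \in \{1, 2, \ldots, m-1\}$.

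For the existence step, set $d = \gcd(\tilde a, m)$, which is a positive divisor of $m$ with $1 \le d < m$, and write $\tilde a = d k$ with $\gcd(k, m/d) = 1$. The assertion that $d$ is associated to $a$ reduces to producing a unit $u \in \UU(\ZZ_m)$ satisfying $d u \equiv \tilde a \pmod m$, equivalently $u \equiv k \pmod{m/d}$. I would construct such a $u$ via the Chinese Remainder Theorem, treating each prime $p$ dividing $m$ separately. If $p \nmid m/d$, the congruence imposes no constraint at $p$, so one simply picks $u \equiv 1$ on the corresponding prime-power component, keeping $p \nmid u$. If instead $p \mid m/d$, then $\gcd(k, m/d) = 1$ forces $p \nmid k$, and the congruence $u \equiv k \pmod{m/d}$ by itself already enforces $p \nmid u$. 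Assembling these local choices via CRT yields a unit $u \in \UU(\ZZ_m)$ with the required property, so $d$ lies in the associatedness class of $a$.

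For uniqueness, whenever $b = au$ with $u \in \UU(\ZZ_m)$ we have $\gcd(b, m) = \gcd(au, m) = \gcd(a, m)$, using the standard fact that multiplication by an element coprime to $m$ preserves the gcd with $m$; hence $\gcd(\cdot, m)$ is constant on each associatedness class. Because any positive divisor $d$ of $m$ satisfies $\gcd(d, m) = d$, two such divisors lying in the same class must coincide. The main obstacle is the compatibility verification in the existence argument, where one must ensure that forcing $u \equiv k \pmod{m/d}$ does not clash with making $u$ a unit modulo $m$; the key observation that resolves this is precisely $\gcd(k, m/d) = 1$, which already supplies coprimality to every prime dividing $m/d$ and leaves the primes of $d$ to be handled freely by the choice $u \equiv 1$.
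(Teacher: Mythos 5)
Your proof is correct, and it identifies the same canonical representative as the paper: the divisor you call $d=\gcd(\tilde a,m)$ is exactly the element $v=\prod_{i\in b(a)}p_i^{m_i}\cdot\prod_{j\notin b(a)}p_j^{\ell_j}$ that the paper constructs, since $v=\prod_i p_i^{\min(\ell_i,m_i)}$. The verifications, however, run differently. For existence the paper produces the associating unit by one explicit manipulation, $a\equiv a-m\prod_{j\notin b(a)}p_j=v\cdot u$, and checks directly that $u$ is divisible by no $p_k$; you instead solve $du\equiv\tilde a\pmod m$ for a unit by reducing it to $u\equiv k\pmod{m/d}$ and gluing local choices with the Chinese Remainder Theorem. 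Your route is a little more conceptual and avoids the bookkeeping with the index set $b(a)$, at the cost of the compatibility check you correctly supply via $\gcd(k,m/d)=1$ (the two moduli you glue are coprime, so the CRT step is legitimate). For uniqueness the arguments are genuinely different: the paper shows that two associated positive divisors $v,w$ of $m$ satisfy $v|_{m}w$ and $w|_{m}v$, hence $v\mid w$ and $w\mid v$ over $\ZZ$, while you observe that $a\mapsto\gcd(\tilde a,m)$ is constant on associatedness classes and acts as the identity on positive divisors of $m$. Your invariant-based uniqueness is shorter and packages both halves of the lemma around a single quantity; it also covers the class of $0$ explicitly, which the paper leaves implicit. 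Both proofs are complete.
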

\begin{proof}
We are going to show that for an arbitrary $a=p_{1}^{\ell_{1}}p_{2}^{\ell_{2}}\cdots p_{n}^{\ell_{n}}s_{a}\in\ZZ_{m}$,
$s_a \in \UU(\ZZ_{m})$, there is a divisor of $m$ which is associated to $a$. Let us denote
$b\left(a\right)=\left\{ i; \; \ell_{i}>m_{i}\right\} \subseteq \left\{ 1,2,\ldots,n\right\}$.
Then, we have
\begin{eqnarray*}
a & \equiv & a-m \prod_{  j\notin b\left(a\right)} p_{j} =  
p_{1}^{\ell_{1}}p_{2}^{\ell_{2}}\cdots p_{n}^{\ell_{n}}s_a-m\cdot\prod_{j \notin b\left(a\right)}p_{j}=\\
 & = & \prod_{  i\in b\left(a\right)} p_{i}^{m_{i}}\cdot
 \prod_{j\notin b\left(a\right)}p_{j}^{\ell_{j}}\left(
  \prod_{i\in b\left(a\right)}p_{i}^{\ell_{i}-m_{i}}\cdot s_a-\prod_{j\notin b\left(a\right)}p_{j}^{m_{j}-\ell_{j}+1}\right)\\
 & = & v\cdot u,
\end{eqnarray*}
where 
$$
u=\left(\prod_{i\in b\left(a\right)}p_{i}^{\ell_{i}-m_{i}}\cdot s_a-\prod_{j\notin b\left(a\right)}p_{j}^{m_{j}-\ell_{j}+1}\right)
$$
 and 
$$
v=\prod_{i\in b\left(a\right)}p_{i}^{m_{i}}\cdot\prod_{j\notin b\left(a\right)}p_{j}^{\ell_{j}}.
$$
Observe that $u$ is invertible because if $u$ had a $p_{k}$ as a divisor,
$k$ would have to be in both $b\left(a\right)$ and $\left\{ 1,2,\ldots,n\right\} \setminus b\left(a\right)$.
By definition, $v$ is a divisor of $m$ associated to $a$.

If two positive divisors of $m$, say $v$
and $w$, were associated, we would have $v|_{m}w$ and $w|_{m}v$.
Thus $v|(am+w)$ and $w|(bm+v)$ for some $a,b\in\ZZ$, and hence $v|w$ and $w|v$. Therefore $v=w$, which proves the uniqueness of a divisor of $m$ corresponding to associatedness class of $v$ and $w$.
\end{proof}

\begin{Corollary}\label{cor:dominating--vertex}
For an arbitrary zero-divisor $a=p_{1}^{\ell_{1}}p_{2}^{\ell_{2}}\cdots p_{n}^{\ell_{n}}s_{a}$ in $\ZZ_{m}$,
there exists $\frac{m}{p_{k}}$, $1 \leq k \leq n$,
such that $a|_{m} (\frac{m}{p_{k}})$.
\end{Corollary}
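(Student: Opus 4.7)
The plan is to deduce the result directly from Lemma~\ref{lem:associated--divisor}. By that lemma, the associatedness class of $a$ contains a unique positive divisor $d$ of $m$, so $a = d u$ in $\ZZ_m$ for some $u \in \UU(\ZZ_m)$. In particular $d = a u^{-1}$, which shows $a \mid_m d$. The task therefore reduces to finding an index $k$ such that $d$ divides $\tfrac{m}{p_k}$ as integers, since then $d \mid_m \tfrac{m}{p_k}$ and hence $a \mid_m \tfrac{m}{p_k}$ by transitivity.

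To find such a $k$, write $d = p_1^{e_1} p_2^{e_2} \cdots p_n^{e_n}$ with $0 \leq e_i \leq m_i$ (these bounds follow from $d \mid m$). The key observation is that $d \neq m$: indeed, if $d = m$ then $a \equiv 0$ in $\ZZ_m$, contradicting the fact that $a$ is a (nonzero) zero-divisor and therefore a vertex of $\zt(\ZZ_m)$. Consequently there must exist some index $k \in \{1, 2, \ldots, n\}$ with $e_k < m_k$, i.e., $e_k \leq m_k - 1$.

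For this $k$ we have $\tfrac{m}{p_k} = p_1^{m_1} \cdots p_k^{m_k - 1} \cdots p_n^{m_n}$, and comparing exponents prime by prime gives $d \mid \tfrac{m}{p_k}$ in $\ZZ$, hence in $\ZZ_m$. Combining with $a \mid_m d$ yields $a \mid_m \tfrac{m}{p_k}$, as required.

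I do not expect any serious obstacle here, since the substantive content is already packaged in Lemma~\ref{lem:associated--divisor}; the only point requiring mild care is the justification that $d$ is a proper divisor of $m$ (rather than $d = m$), which is handled by recalling that vertices of $\zt(\ZZ_m)$ are nonzero.
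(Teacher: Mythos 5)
Your proof is correct and follows essentially the same route as the paper: both pass to the divisor of $m$ in the associatedness class of $a$ (via Lemma~\ref{lem:associated--divisor}) and then observe that, being a proper divisor of $m$, it divides some $\frac{m}{p_k}$. Your version is if anything slightly cleaner, since you justify explicitly (via $a\neq 0$, hence $d\neq m$) why an index $k$ with $e_k<m_k$ exists, a point the paper's proof leaves implicit.
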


\begin{proof}
For a vertex $v$ as found  in the proof of Lemma \ref{lem:associated--divisor},
there exists $k \notin b\left(a\right)$, 
such that $v_{k}<m_{k}$, where $ v_k$ is the the greatest integer such that $p_i^{v_k}$ divides $v$. So $v|\left(\frac{m}{p_{k}}\right)$ and hence $a |_{m}\left(\frac{m}{p_{k}}\right)$.\end{proof}

In the case $m=p^2$, we have by Corollary \ref{prop:connected--iff} that $\zt(\ZZ_m)=K_{p-1}$ and thus the maximal degree of a vertex in the graph
$\Delta$ and the minimal degree $\delta$ are both equal to $p-2$. The next lemma gives us $\Delta$ and $\delta$ for other connected graphs.

\begin{Lemma}\label{prop:max--min--degree}
Let $\zt\left(\ZZ_{m}\right)$
be connected but not complete graph. Then 
$$\Delta\left(\zt\left(\ZZ_{m}\right)\right)=\textrm{deg}\left(\frac{m}{p_{i}}\right)=\frac{m}{p_1}-2 \,
\text { and } \delta\left(\zt\left(\ZZ_{m}\right)\right)=\textrm{deg}\left(p_{1}\right)=p_{1}-1.$$
\end{Lemma}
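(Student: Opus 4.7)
The plan is to handle the maximum and minimum separately; in each case I first pinpoint a vertex attaining the claimed value, then bound the degree of every other vertex.

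For $\Delta$, the candidate is $m/p_1$. By Corollary~\ref{prop:connected--iff} the connectedness assumption gives $m_1\geq 2$, hence $p_1\mid m/p_1$, and $\ann(m/p_1)=\{kp_1: 0\leq k<m/p_1\}$ has $m/p_1$ elements. Every nonzero $w$ in this annihilator satisfies $p_1\mid w+m/p_1\in Z(\ZZ_m)$, and the same divisibility yields $(m/p_1)^2\equiv 0$; removing $0$ and $m/p_1$ itself leaves $m/p_1-2$ neighbors. For an arbitrary vertex $v$, $|\ann(v)|=\gcd(v,m)\leq m/p_1$, and in the equality case $v$ is a nonzero multiple of $m/p_1$, for which the same calculation gives $v\in\ann(v)$; hence $\deg(v)\leq m/p_1-2$ in every case.

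For $\delta$, the analogous computation shows $\deg(p_1)=p_1-1$: $\ann(p_1)=\{k(m/p_1):0\leq k<p_1\}$ has $p_1$ elements, each nonzero element is adjacent to $p_1$ via $p_1\mid m/p_1$, and non-completeness forces $m>p_1^2$, so $p_1\neq k(m/p_1)$ for any $k\geq 1$. The substantive part is the lower bound $\deg(v)\geq p_1-1$ for every $v$. Using Lemma~\ref{lem:associated--divisor} I reduce to $v$ being a positive divisor of $m$, $v=p_1^{e_1}\cdots p_n^{e_n}$ with $1<v<m$, and split on $p_1\mid v$.

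If $e_1\geq 1$, the elements $w_j=j(m/p_1)$ for $j=1,\ldots,p_1-1$ all lie in $\ann(v)$ and satisfy $p_1\mid v+w_j\in Z(\ZZ_m)$, providing $p_1-1$ neighbors unless $v$ coincides with some $w_j$; in the latter case $v$ attains the maximum $m/p_1-2\geq p_1-1$, using again $m>p_1^2$. If $e_1=0$ and some $e_k$ lies strictly between $0$ and $m_k$, then $p_k$ divides both $v$ and $m/v$, forcing $p_k\mid v+w$ for every $w\in\ann(v)$; since $v\notin\ann(v)$ (as $p_1\mid m$ but $p_1\nmid v^2$), we get $\deg(v)=v-1\geq p_2-1>p_1-1$. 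The delicate subcase is $e_1=0$ with every $e_k\in\{0,m_k\}$: then $v=\prod_{k\in K}p_k^{m_k}$ for some nonempty $K\subseteq\{2,\ldots,n\}$, no prime divides both $v$ and $m/v$, and a prime-by-prime check shows that $v+j(m/v)\in Z(\ZZ_m)$ precisely when $\gcd(j,v)>1$. Counting such $j$'s yields $\deg(v)=v-1-\phi(v)$, which easily exceeds $p_1-1$ since $v-\phi(v)\geq v/p_{\min K}\geq p_2>p_1$.

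The main obstacle is this last subcase: with no common prime factor of $v$ and $m/v$, the adjacency condition must be teased out modulo each prime of $m$ separately, and the totient-based count is essential to conclude $\deg(v)\geq p_1-1$.
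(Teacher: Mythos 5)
Your proof is correct. The underlying mechanism is the same as the paper's---in $\ZZ_m$ the neighbourhood of $v$ sits inside $\ann(v)$, which is the set of multiples of $m/\gcd(v,m)$, and one checks the sum condition via divisibility by a suitable $p_i$---but the two arguments are organized differently. For $\Delta$ you bound $\deg(v)$ by $\left|\ann(v)\right|=\gcd(v,m)\leq m/p_1$, subtracting one more in the equality case because then $v\in\ann(v)$; the paper instead invokes Corollary~\ref{cor:dominating--vertex} to get $N[q]\subseteq N[m/p_k]$ for some $k$. The real divergence is in the lower bound $\deg(v)\geq p_1-1$: the paper picks \emph{any} prime $p_l$ dividing $v$ and uses the $p_l-1$ nonzero multiples of $m/p_l$ as neighbours of $v$, which settles every vertex in one stroke; you anchor the argument to $p_1$ specifically, which forces the extra subcases when $p_1\nmid v$, culminating in the exact count $\deg(v)=v-1-\varphi(v)$ for $v=\prod_{k\in K}p_k^{m_k}$. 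That totient computation is correct but unnecessary---running your own first subcase with $p_1$ replaced by any $p_k$ dividing $v$ (the multiples $j(m/p_k)$ annihilate $v$ and sum with $v$ to a multiple of $p_k$) would have collapsed both extra subcases. On the other hand, your explicit handling of the exceptional candidate neighbour (the case where $v$ itself is one of the multiples of $m/p_k$, where you fall back on $v$ having maximal degree) is more careful than the paper's literal claim $N[p_l]\subseteq N[v]$, which as stated can fail (in $\ZZ_{36}$ one has $2\in N[2]$ but $2\notin N[6]$), even though the degree inequality it is meant to justify does hold.
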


\begin{proof}
Consider the vertex $q_i=\frac{m}{p_{i}}$ for any $i\in\left\{ 1,2,\ldots,n\right\} $ and its closed neighbourhood  
$N\left[q_i\right]=\left\{ p_{i}r; \; 1\leq r\leq \frac{m}{p_{i}}-1\right\} $.
We have $\textrm{deg}\left(q_i\right)=\frac{m}{p_{i}}-2$.
By Corollary \ref{cor:dominating--vertex}, for an arbitrary vertex
$q$ in $\zt\left(\ZZ_{m}\right)$ there exists $k\in\left\{ 1,2,\ldots,n\right\} $ such that $q|_{m}\left(\frac{m}{p_{k}}\right)$.
Hence, $N\left[q\right] \subseteq N\left[q_k\right]$ and so we have 
$\textrm{deg}\left(q\right)\leq\textrm{deg}\left(q_k\right)=\frac{m}{p_{k}}-2\leq \frac{m}{p_1}-2$.

Now, consider the vertex $p_{i}$ for any $i\in\left\{ 1,2,\ldots,n\right\} $.
Its open neighbourhood is equal to  $N\left(p_{i}\right)=\left\{ r\cdot \frac{m}{p_{i}}:1\leq r\leq p_{i}-1\right\} $ 
and hence $\textrm{deg}\left(p_{i}\right)=p_{i}-1$.
%
For an arbitrary vertex $v=p_{1}^{v_{1}}p_{2}^{v_{2}}\cdots p_{n}^{v_{n}}s_{v}\in V\left(\zt\left(\ZZ_{m}\right)\right)$,
there is $l\in\left\{ 1,2,\ldots,n\right\} $ such that $v_{l}\geq1$.
 Hence, $N\left[p_{l}\right] \subseteq N\left[v\right]$ and thus $\textrm{deg}\left(v\right)\geq\textrm{deg}\left(p_{l}\right)=p_{l}-1\geq p_{1}-1$.
 \end{proof}

 If $G$ is a connected but not complete graph, isomorphic to $\zt\left(\ZZ_{m}\right)$ for some $m$, then 
by Lemma \ref{prop:max--min--degree} we have $m=\left(\Delta\left(G\right)+2\right)\left(\delta\left(G\right)+1\right)$. Therefore, we have the following property of the total zero-divisor graph.

\begin{Corollary}\label{cor:deltas--m}
If  $\zt\left(\ZZ_{m}\right) \cong \zt\left(\ZZ_{n}\right)$ is a connected graph for some integers $m,n$, then $m=n$.
\end{Corollary}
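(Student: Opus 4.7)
The plan is to split the proof into two cases depending on whether the common graph $G := \zt(\ZZ_m) \cong \zt(\ZZ_n)$ is complete. In the non-complete case, the relation $m = (\Delta(G)+2)(\delta(G)+1)$ recorded in the paragraph immediately preceding the corollary statement (itself extracted from Lemma \ref{prop:max--min--degree}) does all the work. Since $\Delta$ and $\delta$ are graph isomorphism invariants, applying this formula to both $\zt(\ZZ_m)$ and $\zt(\ZZ_n)$ gives $m = (\Delta(G)+2)(\delta(G)+1) = n$, and the case is closed without further computation.

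For the complete case, I would appeal to Corollary \ref{prop:connected--iff}: among connected graphs of the form $\zt(\ZZ_k)$, those of diameter at most one occur exactly for $k = p^2$ with $p$ prime, in which case $\zt(\ZZ_{p^2}) = K_{p-1}$. Hence, if $G$ is complete, then necessarily $m = p^2$ and $n = q^2$ for primes $p, q$, and the isomorphism $K_{p-1} \cong K_{q-1}$ forces $p-1 = q-1$, so $p = q$ and $m = n$. The small boundary case $G = K_1$ (arising from $m = 4$) is absorbed in this same argument, since $K_{q-1} \cong K_1$ still forces $q = 2$.

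Honestly, there is no genuine obstacle here: the two preceding results (Corollary \ref{prop:connected--iff} and Lemma \ref{prop:max--min--degree}) have been engineered precisely so that the recovery of $m$ from the graph is explicit, and all that remains is to recognize the clean dichotomy between the complete and non-complete regimes. The only point that demands a moment of care is noticing that the formula $m = (\Delta+2)(\delta+1)$ fails for the complete $\zt(\ZZ_{p^2}) = K_{p-1}$ (where it would give $p(p-1) \neq p^2$), which is exactly why splitting into cases is required rather than quoting Lemma \ref{prop:max--min--degree} globally.
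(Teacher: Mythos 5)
Your proof is correct and follows essentially the same route as the paper, which derives the corollary from the identity $m=\left(\Delta(G)+2\right)\left(\delta(G)+1\right)$ of Lemma \ref{prop:max--min--degree} stated in the paragraph immediately preceding it. In fact you are slightly more careful than the paper: that identity only applies to connected non-complete graphs (and indeed gives $p(p-1)\neq p^2$ for $K_{p-1}=\zt\left(\ZZ_{p^2}\right)$), and your separate treatment of the complete case via Corollary \ref{prop:connected--iff}(\ref{complete}) supplies a step the paper leaves implicit.
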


 \begin{Corollary} \label{prop:regular--complete}
A connected graph $\zt\left(\ZZ_{m}\right)$ is regular if and only if it is complete.
\end{Corollary}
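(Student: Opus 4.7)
The forward direction is immediate: a complete graph is certainly regular. For the converse I plan to argue by contradiction, supposing that $\zt(\ZZ_m)$ is connected and regular yet not complete, and deriving an arithmetic impossibility.

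The main tool will be Lemma \ref{prop:max--min--degree}, which gives explicit formulas for the extremal degrees of a connected non-complete total zero-divisor graph of $\ZZ_m$, namely
$\Delta(\zt(\ZZ_m))=\frac{m}{p_1}-2$ and $\delta(\zt(\ZZ_m))=p_1-1$. Regularity forces $\Delta=\delta$, which rearranges to $m=p_1(p_1+1)$. The rest of the argument is then purely number-theoretic.

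To close the argument, I would invoke Corollary \ref{prop:connected--iff}, which tells us that connectedness of $\zt(\ZZ_m)$ requires each prime $p_i$ in the factorization of $m$ to occur with exponent at least $2$. In particular $p_1^2 \mid m$, so $p_1 \mid \frac{m}{p_1}=p_1+1$, contradicting $\gcd(p_1,p_1+1)=1$ for any prime $p_1$. This contradiction yields the result.

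I do not expect any substantive obstacle here: both needed facts are already established in the excerpt, and the combinatorics collapses to a one-line divisibility check. The only minor point to verify is that the hypothesis of Lemma \ref{prop:max--min--degree}, namely being connected but not complete, is available in the contradiction setup, which it is by assumption.
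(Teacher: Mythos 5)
Your proposal is correct and follows essentially the same route as the paper's own proof: both apply Lemma \ref{prop:max--min--degree} to a hypothetical connected, regular, non-complete $\zt(\ZZ_m)$ to get $\frac{m}{p_1}=p_1+1$, and both derive the contradiction from Corollary \ref{prop:connected--iff} forcing $p_1 \mid \frac{m}{p_1}$. No gaps.
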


\begin{proof}
Assume $\zt\left(\ZZ_{m}\right)$
be connected but not complete graph. If it is regular, we have $\Delta\left(\zt\left(\ZZ_{m}\right)\right)=\delta\left(\zt\left(\ZZ_{m}\right)\right)$.
By Lemma \ref{prop:max--min--degree} it follows that $\frac{m}{p_1}-2=p_{1}-1\Rightarrow \frac{m}{p_1}=p_{1}+1$.
By Corollary \ref{prop:connected--iff}, connectedness of $\zt\left(\ZZ_{m}\right)$
implies $m_1 \geq 2$ and thus $p_{1}|\left(\frac{m}{p_1}\right)$, a contradiction. So, 
$\zt\left(\ZZ_{m}\right)$ is not regular graph. Since every complete graph is regular, we have the equivalence.
\end{proof}

\begin{Corollary}
For a pair of integers $\left(x,y\right)$, there is a positive integer
$m$ such that
\begin{enumerate}
\item[(a)] graph $\zt\left(\ZZ_{m}\right)$ is connected but not complete,
\item[(b)] $\Delta\left(\zt\left(\ZZ_{m}\right)\right)=x$ and
$\delta\left(\zt\left(\ZZ_{m}\right)\right)=y$;
\end{enumerate}
if and only if the following conditions hold:
\begin{enumerate}
\item $x>y>0$,
\item $y+1$ is the least prime divisor of $x+2$,
\item if $q|\left(x+2\right)\left(y+1\right)$ then $q^{2}|\left(x+2\right)\left(y+1\right)$
for all prime $q$.
\end{enumerate}
\end{Corollary}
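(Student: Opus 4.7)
The plan is to observe that Lemma~\ref{prop:max--min--degree} completely pins down $m$ from $(x,y)$: when $\zt(\ZZ_m)$ is connected but not complete, the lemma forces $x=\frac{m}{p_1}-2$ and $y=p_1-1$, where $p_1$ is the smallest prime divisor of $m$. Solving gives $m=(x+2)(y+1)$ and $p_1=y+1$. Both directions of the equivalence then reduce to a translation between arithmetic facts about the factorization of $m$ and Corollaries~\ref{prop:connected--iff} and \ref{prop:regular--complete}.

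For the forward direction, suppose such an $m$ exists. The above identification immediately gives $m=(x+2)(y+1)$ with $p_1=y+1$ prime. Because $\zt(\ZZ_m)$ is connected but not complete, Corollary~\ref{prop:regular--complete} says the graph is not regular, so $\Delta>\delta$, i.e.\ $x>y$; and $y=p_1-1\geq 1$ since $p_1\geq 2$, establishing (1). By Corollary~\ref{prop:connected--iff} every exponent in the factorization of $m$ is at least $2$, so $p_1^2\mid m$, which forces $p_1\mid\frac{m}{p_1}=x+2$; since $p_1$ is the least prime divisor of $m$, it is \emph{a fortiori} the least prime divisor of $x+2$, giving (2). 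The same multiplicity condition on every prime divisor of $m=(x+2)(y+1)$ is precisely condition (3).

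For the converse, assume (1)--(3) and set $m:=(x+2)(y+1)$. Condition (2) says $y+1$ is prime and divides $x+2$, so $(y+1)^2\mid m$; combined with (3), every prime factor of $m$ appears with multiplicity at least $2$, so $\zt(\ZZ_m)$ is connected by Corollary~\ref{prop:connected--iff}. Condition (2) also makes $y+1$ the smallest prime divisor of $m$, i.e.\ $p_1=y+1$. To invoke Lemma~\ref{prop:max--min--degree} we must rule out completeness, i.e.\ show $m\neq p^2$: otherwise, since $y+1$ is a prime divisor of $p^2$ we would have $y+1=p$, forcing $x+2=p=y+1$, hence $x=y-1<y$, contradicting (1). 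With the graph connected and non-complete, Lemma~\ref{prop:max--min--degree} gives $\Delta=\frac{m}{p_1}-2=x$ and $\delta=p_1-1=y$, as required.

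The steps are all direct once the identification $m=(x+2)(y+1)$ is in hand; the only delicate point is the converse check that $m\neq p^2$, since without it Lemma~\ref{prop:max--min--degree} does not apply. This is the step I would flag as the main pitfall, but it is easily dispatched using condition (1) as above.
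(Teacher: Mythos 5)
Your proof is correct and follows essentially the same route as the paper's: identify $m=(x+2)(y+1)$ from Lemma \ref{prop:max--min--degree}, and translate conditions (1)--(3) back and forth through Corollaries \ref{prop:connected--iff} and \ref{prop:regular--complete}. The one point where you improve on the paper is in the converse: you explicitly rule out $m=p^{2}$ (using condition (1)) \emph{before} invoking Lemma \ref{prop:max--min--degree}, whereas the paper applies that lemma first and only afterwards deduces non-completeness, a mildly circular ordering that your check repairs.
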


\begin{proof}
Assume that there are positive integers $m,x,y$ such that graph $\zt\left(\ZZ_{m}\right)$
satisfies conditions (a) and (b). By Lemma \ref{cor:deltas--m}, $m$ is uniquely determined as
$m=\left(x+2\right)\left(y+1\right)$. Since $\zt\left(\ZZ_{m}\right)$ is connected, Corollary \ref{prop:connected--iff} implies (3) and moreover that $y>0$. 
In particular, $p_{1}|\frac{m}{p_1}$ and $p_{1}$ is the smallest prime divisor of $\frac{m}{p_1}$.
Since $\zt\left(\ZZ_{m}\right)$ is not complete, Proposition \ref{prop:regular--complete} implies $x>y$ and thus (1) and (2).

Conversely, assume that $\left(x,y\right)$ is a pair of integers
such that conditions (1), (2) and (3) hold. Take $m=\left(x+2\right)\left(y+1\right)$. Corollary \ref{prop:connected--iff} and (3) imply that the graph $\zt\left(\ZZ_{m}\right)$
is connected. By (2), $y+1$ is the least prime divisor of $m$ and thus by Proposition \ref{prop:max--min--degree} it follows that $\Delta\left(\zt\left(\ZZ_{m}\right)\right)=\frac{m}{y+1}-2=x$
and $\delta\left(\zt\left(\ZZ_{m}\right)\right)=\left(y+1\right)-1=y$. Moreover, by (1),  $\zt\left(\ZZ_{m}\right)$
is not regular thus by \ref{prop:regular--complete} not complete.
\end{proof}

\bigskip
\subsection{Colorings of $\zt\left(\ZZ_{m}\right)$}

In this subsection we will investigate two different colorings of the total zero-divisor graph. 

The smallest number of colors needed to color the vertices of graph $G$, such that any two adjacent vertices have different colors,  is called the \emph{chromatic number} of $G$ and is denoted by $\chi(G)$.

Every nonempty bipartite graph has the chromatic number  equal to 2. 
Thus by Corollary \ref{acyclic} we have that the $\chi\left(\zt(\ZZ_{m})\right)=2$ if and only if
$m$ is $2^{2}$, $2^{3}$, $3^{2}$, $m=2^{2}p$ 
or $m=pq$, for any distinct prime numbers $p$ and $q$. 
In the following proposition we constructively  find the chromatic number of any total zero-divisor graph $\zt\left(\ZZ_{m}\right)$.

\begin{Proposition}\label{prop:chromatic}
In the case $\zt\left(\ZZ_{m}\right)$ is connected, let $o(m)$ denote the number of odd exponents $m_i$. Then 
the chromatic number of graph $\zt\left(\ZZ_{m}\right)$
is equal to
$$\chi\left(\zt\left(\ZZ_{m}\right)\right)=p_{1}^{\left\lfloor\frac{m_1}{2}\right\rfloor }\cdots p_{n}^{\left\lfloor\frac{m_n}{2}\right\rfloor }+ o(m)-1.$$

If $\zt\left(\ZZ_{m}\right)$ is not connected and not empty, then let  $I \subseteq \{1,\ldots, n\}$ be the set of all indices $i$ such that $m_i\geq 2$ and let $J=\{1,\ldots,n\} \setminus I$. Then the chromatic number of  
$\zt\left(\ZZ_{m}\right)$
is equal to
$$\chi\left(\zt\left(\ZZ_{m}\right)\right)=\prod_{i \in I} \left\lceil\frac{m_i}{2}\right\rceil + |J|.$$  
\end{Proposition}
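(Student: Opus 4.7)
The plan is to base both cases on a \emph{profile analysis}: for a nonzero $v \in \ZZ_m$, define $\nu(v) = (\nu_1(v), \ldots, \nu_n(v))$ with $\nu_i(v) = \min(v_{p_i}(v), m_i)$. A direct check will confirm that $u \sim v$ in $\zt(\ZZ_m)$ if and only if (A) $\nu_i(u) + \nu_i(v) \geq m_i$ for every $i$, and (B) there exists $i$ with $\nu_i(u), \nu_i(v) \geq 1$. Condition (A) encodes $uv \equiv 0 \pmod m$, and assuming (A), condition (B) is equivalent to $u + v$ being a zero-divisor---otherwise the disjoint prime supports of $u$ and $v$ force $u + v$ to be coprime to $m$. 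Since adjacency is determined by profiles, each associatedness class is either a clique (the \emph{Type A} classes, where $\nu_i \geq \lceil m_i/2 \rceil$ for every $i$, i.e.\ the nonzero multiples of $L = \prod p_i^{\lceil m_i/2 \rceil}$) or an independent set (the \emph{Type B} classes).

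For Case~1 I will prove $\chi \geq U - 1 + o(m)$, with $U = \prod p_i^{\lfloor m_i/2 \rfloor}$, by extending the clique of Lemma~\ref{thm:cliqueZm} with the vertex $L/p_i$ for each odd $m_i$; a direct computation yields $L^2/p_i \equiv 0 \pmod m$ whenever $m_i$ is odd, so $L/p_i$ is adjacent to every $rL$, and two such $L/p_i, L/p_{i'}$ with distinct odd indices are adjacent to each other as well. The matching upper bound will be a three-stage colouring. First, assign each Type A vertex a distinct colour from $\{1, \ldots, U-1\}$. Second, call a Type B profile $q$ \emph{saturated} if $q_i \geq \lfloor m_i/2 \rfloor$ for all $i$ (equivalently, adjacent to every Type A vertex); any such $q$ necessarily has $q_j = \lfloor m_j/2 \rfloor$ for some odd $m_j$, so colour $q$ by one such index $j$, using a block of $o(m)$ fresh colours---this is proper since two saturated profiles sharing index $j$ satisfy $q_j + q'_j = m_j - 1 < m_j$, violating~(A). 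Third, for each non-saturated Type B class $q$ reuse a Type A colour not used by any Type A neighbour of $q$.

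The hard part will be verifying the consistency of the last step for mutually adjacent non-saturated Type B profiles. The planned argument is that for any non-saturated $q$ the Type A vertex $L$ is not adjacent to $q$, so colour $1$ always belongs to the available list $\mathcal{L}(q) \subseteq \{1, \ldots, U-1\}$; moreover, if $q, q'$ are both non-saturated and adjacent then their deficient coordinates must be distinct (since $q_j < \lfloor m_j/2 \rfloor$ forces $q'_j > \lceil m_j/2 \rceil$), which guarantees that distinct Type A fallback vertices are simultaneously available to $q$ and $q'$ for a proper list colouring. For Case~2 I will use the CRT decomposition $\ZZ_m \cong \prod \ZZ_{p_i^{m_i}}$, in which $\ZZ_{p_j}$ is a field for $j \in J$, and adapt the profile analysis so that $\nu_j \in \{0, 1\}$ on those factors; the lower-bound clique will combine a clique of size $\prod_{i \in I} \lceil m_i/2 \rceil$ in the ``$I$-part'' with $|J|$ additional vertices obtained by varying one $J$-coordinate at a time, and the colouring will mirror Case~1 with one extra colour dedicated to each $j \in J$.
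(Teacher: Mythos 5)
Your adjacency criterion via truncated valuation profiles is correct, and your Case~1 plan is essentially the paper's argument reorganized: the lower-bound clique (the $U-1$ nonzero multiples of $L=\prod_i p_i^{\lceil m_i/2\rceil}$ together with $L/p_{k_j}$ for each odd exponent) is exactly the paper's $A_0\cup\{v_1,\dots,v_{o(m)}\}$, and your three stages repackage its sets $A_1,\dots,A_{o(m)+1}$. The one point you must still pin down is stage three: ``distinct fallback vertices are simultaneously available to $q$ and $q'$'' is only a pairwise statement and does not by itself produce a proper colouring of the whole non-saturated part. It does work if you make the assignment canonical: for a non-saturated class $q$ put $j=\min\{i:\ q_i<\lfloor m_i/2\rfloor\}$ and give $q$ the colour of the fixed Type~A vertex $a_j=m/p_j^{\lfloor m_j/2\rfloor}$, which has $\nu_j(a_j)=\lceil m_j/2\rceil$ and hence is a non-neighbour of $q$; two classes receiving the same colour then share the deficient index $j$ and are non-adjacent by your disjointness observation. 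With that, Case~1 is complete.

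Case~2 is where the proposal genuinely fails, and no repair of the write-up will save it, because the bound you are asked to prove is not the chromatic number. ``Mirroring Case~1'' already produces a clique larger than the target: the nonzero multiples of $\prod_i p_i^{\lceil m_i/2\rceil}$ form a clique of size $\prod_{i\in I}p_i^{\lfloor m_i/2\rfloor}-1$, which in general exceeds $\prod_{i\in I}\lceil m_i/2\rceil+|J|$. Concretely, for $m=180=2^2\cdot3^2\cdot5$ we have $I=\{1,2\}$, $J=\{3\}$ and the claimed value is $1\cdot1+1=2$; but $30,60,90,120,150$ are pairwise adjacent (each product is divisible by $2^3 3^2 5^2$, hence $\equiv 0$, and each sum is divisible by $30$), and $6$ is adjacent to all five, so $\zt(\ZZ_{180})$ contains $K_6$ and $\chi\geq 6$. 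Thus the upper-bound half of your Case~2 plan cannot be carried out. (The paper's own proof of this half breaks at the claim that every vertex outside its clique $A$ has a deficient coordinate: the vertex $150\in\ZZ_{180}$, with profile $(1,1,1)$, has none.) The correct analogue of Case~1 should be $\prod_{i\in I}p_i^{\lfloor m_i/2\rfloor}+o(m)-1$, where $o(m)$ counts all odd exponents including those equal to $1$; you should flag the discrepancy and prove that formula rather than the one displayed.
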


\begin{proof}
Suppose first that $\zt\left(\ZZ_{m}\right)$ is connected. By Lemma \ref{thm:cliqueZm} we need at least 
$ \prod\limits_{i=1}^n p_{i}^{\left\lfloor\frac{m_i}{2}\right\rfloor } -1$ 
colors  to color the vertices of the form 
 $r \prod\limits_{i=1}^n p_i^{\left\lceil \frac{m_i}{2}\right\rceil}$, 
 $1 \leq r \leq p_1^{\left\lfloor \frac{m_1}{2}\right\rfloor} \dots p_n^{\left\lfloor \frac{m_n}{2}\right\rfloor}-1$,
 which form a clique $A_0$ in $\zt\left(\ZZ_{m}\right)$.
We will construct subsets $A_0,A_1,\ldots,A_{o(m)} $ of $V(\zt\left(\ZZ_{m}\right))$ in the 
following way. 

Let $k_1< k_2<\ldots< k_{o(m)}$ denote indices such that $m_{k_j}$ is odd for $j=1,2,\ldots,o(m)$. 
Now, define the sets $A_{j}$ to consist of all zero-divisors of the form 
$s\, p_{k_j}^{\left\lceil \frac{m_{k_j}}{2}\right\rceil-i}$, $1 \leq i \leq \left\lceil \frac{m_{k_j}}{2}\right\rceil$, 
where $s$ is coprime to $p_k$ and which are not already in $A_0,A_{1},A_{2},\ldots,A_{j-1}$. 
Note that  there are no edges within any of the sets $A_{j}$ and by definition they are disjoint. 
However, vertices
$$v_j=\frac{1}{p_{k_j}} \prod\limits_{i=1}^{n} p_i^{\left\lceil \frac{m_{k_i}}{2}\right\rceil} \in A_{j},$$
 $j=1,2,\ldots,o(m)$, form a clique and each of them is adjacent to every vertex in $A_0$. 
  Therefore, 
\begin{equation}\label{eq1}
 \chi\left(A_0 \cup A_1\cup \ldots \cup A_{o(m)}\right)\geq \chi\left(A_0 \right)+o(m)=
        \prod_{i=1}^n p_{i}^{\left\lfloor\frac{m_i}{2}\right\rfloor }+ o(m)-1.
\end{equation}


Now, let $A_{o(m)+1}=V(\zt\left(\ZZ_{m}\right))-\bigcup\limits_{i=0}^{o(m)} A_{i}$. Note that if 
$o(m)=n$, then $A_{o(m)+1}=\emptyset$. Otherwise, for every vertex 
$w=p_1^{s_1}\dots p_n^{s_n}u$ in $A_{o(m)+1}$, there exists  $s_j$ such that $m_j$ is even and 
$s_j<\frac{m_{j}}{2}$ and that $s_i \geq \left\lfloor \frac{m_{i}}{2}\right\rfloor$ if $m_i$
is odd. Clearly $w$ is not adjacent to 
 $u=p_j^{ \frac{m_{j}}{2}}\prod_{i \ne j} p_i^{m_i}\in A_0$  and
 therefore, $w$ can be colored with the same color as $u$. Notice that $w$ and $w'$ which get the same color in that way, are not adjacent to each other.
Therefore, by \eqref{eq1}
 \begin{align*}
  \chi\left(\zt(\ZZ_m)\right)= \chi\left(A_0 \cup A_1\cup \ldots \cup A_{o(m)}\right)= 
  \prod_{i=1}^n p_{i}^{\left\lfloor\frac{m_i}{2}\right\rfloor }+ o(m)-1.
  \end{align*}
  
If $\zt\left(\ZZ_{m}\right)$ is not connected, then $J \ne \emptyset$. Moreover, if $\zt\left(\ZZ_{m}\right)$ has an edge, observe that vertices of the form 
$$\prod\limits _{i \in I, t_i \geq 0}{p_i^{ \left\lfloor \frac{m_{i}}{2}\right\rfloor+t_i}} \prod\limits_{j \in J}{p_j},$$ 
together with vertices of the form $\frac{m}{p_j}$, where $j \in J$, form a clique $A$ of size 
$\prod_{i \in I}{\left\lceil \frac{m_{i}}{2}\right\rceil}+|J|$. So, at least that many colors is needed.
Let us show that coloring the remaining vertices does not require any additional colors. 

Any vertex $x=p_1^{r_1} \cdots p_n^{r_n}$ not in $A$ must have at least one of these two properties: 
\begin{enumerate}
 \item there exists $i \in I$ such that $r_i < \left\lfloor \frac{m_{i}}{2} \right\rfloor$, or 
 \item there exists $j \in J$ such that $r_j = 0$, i.e. $p_j$ does not divide $x$. 
\end{enumerate}
If $r_i < \left\lfloor \frac{m_{i}}{2} \right\rfloor$ for some $i \in I$, then $x$ is not adjacent to 
$u=mp_i^{-\left\lceil m_{i}/2 \right\rceil}\in A$, thus can be colored with the same color as $u$. In the second
case, $r_j = 0$ for some $j \in J$, then $x$ is not adjacent to $v=\frac{m}{p_j}\in A$ and can be colored
with the same color as $v$. If  
$x=p_1^{r_1} \cdots p_n^{r_n}$ and $y=p_1^{s_1} \cdots p_n^{s_n}$ are not in $A$ and would they take the 
color of the same vertex in $A$, then either there is $i \in I$ such that 
$r_i < \left\lfloor \frac{m_{i}}{2} \right\rfloor$ and $s_i < \left\lfloor \frac{m_{i}}{2} \right\rfloor$, or there is $j \in J$ such that $p_j=r_j=0$. In any case $x$ is not adjacent to $y$ and thus we found the coloring of $\zt({\ZZ_m})$
and proved that
$\chi\left(\zt\left(\ZZ_{m}\right)\right)=\prod_{i \in I} \left\lceil\frac{m_i}{2}\right\rceil + |J|$. 
\end{proof}

\begin{example}
Let us ilustrate the algorithm presented in the proof of Proposition \ref{prop:chromatic} in the case $m=2^{3}\cdot 3^{2}\cdot5=360$.

The set $A_0$ contains vertices: $2^{2}\cdot 3 \cdot 5, 2^{2}\cdot 3 \cdot 5 \cdot 2, 2^{2}\cdot 3 \cdot 5 \cdot 3, 2^{2}\cdot 3 \cdot 5 \cdot 4, 2^{2}\cdot 3 \cdot 5 \cdot 5 $. They form a $K_5$. 
The set $A_1$ is the union of the sets $ \{2^{1}, 2^{1}\cdot 3, 2^1\cdot 5, 2^1\cdot 7, 2^1\cdot 9, 2^1\cdot 11, 2^1\cdot 13, 2 ^1\cdot 15,..., 2^1\cdot 179\}$
and $\{2^{0}\cdot 3, 2^0\cdot 5,  2^0\cdot 9, 2 ^0\cdot 15,..., 2^0\cdot 177\}$. The vertex $v_1=2^1 \cdot 15$ is adjacent to every vertex in $A_0$ and $v_1\sim v_2$. 
Moreover $A_2=\{ 5^0 \cdot 4, 5^0 \cdot 8, 5^0 \cdot 12, \dots, 5^0 \cdot 356 \}$ and $v_2=2^2 \cdot 3^1 \cdot 5^0 \in A_2$ is adjacent to all the vertices in $A_0$. 
Lastly, $A_3=\{ 2^2 \cdot 3^0 \cdot 5^1, 2^3 \cdot 3^0 \cdot 5^1, 2^4 \cdot 3^0 \cdot 5^1, \cdots, 2^2 \cdot 3^0 \cdot 5^1 \cdot 17 \}$
and for every vertex in $A_3$, there is a vertex in $A_0$, which is not adjacent to it ($2^2 \cdot 3^0 \cdot 5^1  \nsim 2^2 \cdot 3^1 \cdot 5^1$, $2^3 \cdot 3^0 \cdot 5^1 \nsim 2^2 \cdot 3^1 \cdot 5^1$,\ldots). 
The graph vertices require 5 colors to color $A_0$ and all other vertices except $v_1$ and $v_2$. Since $v_1 \sim v_2$ and they are adjacent to all the vertices in $A_0$,
we have $ \chi\left(\zt\left(\ZZ_{360}\right)\right)=7$.
\end{example}

The edge coloring of a graph requires that no two adjacent edges have the same color and the smallest number of colors for the edge coloring is called the \emph{chromatic index} of $G$ and is denoted by $\chi'(G)$.

\begin{Proposition}
If $\zt\left(\ZZ_{m}\right)$ is a connected but not complete
graph, then its chromatic index is equal to $\chi'\left(\zt\left(\ZZ_{m}\right)\right)=\frac{m}{p_1}-2$.
\end{Proposition}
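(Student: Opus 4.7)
Since Lemma \ref{prop:max--min--degree} gives $\Delta\bigl(\zt(\ZZ_m)\bigr) = \frac{m}{p_1} - 2$, the lower bound $\chi'\bigl(\zt(\ZZ_m)\bigr) \geq \frac{m}{p_1} - 2$ is immediate, and Vizing's theorem supplies $\chi'\bigl(\zt(\ZZ_m)\bigr) \leq \frac{m}{p_1} - 1$. The plan is to rule out the Class~2 case, and for that I intend to use Vizing's adjacency lemma (for each edge $uv$ of a $\Delta$-critical graph with $\deg(u)=k$, the vertex $v$ has at least $\Delta-k+1$ neighbors of degree $\Delta$ other than $u$).

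The first step is to pin down the set $V_\Delta$ of vertices of maximum degree. Sharpening the argument in the proof of Lemma \ref{prop:max--min--degree}, a nonzero vertex $v$ achieves $\deg(v) = \frac{m}{p_1} - 2$ precisely when $\ann(v) = p_1\ZZ_m$, i.e.\ exactly when $v$ is a nonzero multiple of $\frac{m}{p_1}$. Hence $V_\Delta = \bigl\{k\tfrac{m}{p_1} : 1 \leq k \leq p_1 - 1\bigr\}$. Since $m_1 \geq 2$ forces $p_1 \mid \tfrac{m}{p_1}$, any two such vertices annihilate each other modulo $m$ and have sum in $p_1\ZZ_m \subseteq Z(R)$, so $V_\Delta$ induces a clique $K_{p_1 - 1}$.

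Assume for contradiction that $\zt(\ZZ_m)$ is Class~2 and pass to a $\Delta$-critical subgraph $H$ by deleting edges one by one while preserving $\chi' = \Delta + 1$; since $\chi'(H) \leq \Delta(H) + 1$ by Vizing's theorem, we must have $\Delta(H) = \Delta$. Every $\Delta$-vertex of $H$ is also a $\Delta$-vertex of $\zt(\ZZ_m)$, so $V_\Delta(H) \subseteq V_\Delta$. Pick $v \in V_\Delta(H)$; since $\deg_H(v) = \Delta$, all edges at $v$ survive, so $N_H(v) = p_1\ZZ_m \setminus \{0, v\}$, which contains $p_1$. Applying Vizing's adjacency lemma to the edge $\{p_1, v\}$ of $H$, together with the bound $\deg_H(p_1) \leq \deg(p_1) = p_1 - 1$ from Lemma \ref{prop:max--min--degree}, I obtain at least $\Delta - (p_1 - 1) + 1 = \tfrac{m}{p_1} - p_1$ neighbors of $v$ of degree $\Delta$ in $H$. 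But every such neighbor lies in $V_\Delta \setminus \{v\}$, a set of size $p_1 - 2$, so we are forced to $\tfrac{m}{p_1} \leq 2p_1 - 2$.

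The contradiction follows by verifying $\tfrac{m}{p_1} > 2p_1 - 2$ in every case where $\zt(\ZZ_m)$ is connected but not complete. By Corollary \ref{prop:connected--iff}, either $n = 1$ with $m_1 \geq 3$, giving $\tfrac{m}{p_1} = p_1^{m_1 - 1} \geq p_1^2 > 2p_1 - 2$, or $n \geq 2$ with all $m_i \geq 2$, so $p_2 \geq 3$ and $\tfrac{m}{p_1} \geq p_1 p_2^2 \geq 9p_1 > 2p_1 - 2$. Hence $\zt(\ZZ_m)$ is Class~1 and $\chi'\bigl(\zt(\ZZ_m)\bigr) = \tfrac{m}{p_1} - 2$. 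The delicate point I expect is the reduction to the critical subgraph $H$: one must confirm that both the value $\Delta$ and the particular edge $\{p_1, v\}$ survive in $H$, so that Vizing's adjacency lemma can actually be applied at that specific edge.
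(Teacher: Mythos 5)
Your proof is correct, but it takes a genuinely different route from the paper's. Both arguments begin the same way: Vizing's theorem reduces the problem to deciding between $\Delta$ and $\Delta+1$, and the vertices of maximum degree are identified as the $p_1-1$ nonzero multiples of $\frac{m}{p_1}$ (your sharpening of Lemma \ref{prop:max--min--degree} — that $\deg(v)=\frac{m}{p_1}-2$ forces $N[v]=N\left[\frac{m}{p_1}\right]$ and hence $p_1v\equiv 0$, so $\ann(v)=p_1\ZZ_m$ — is a genuine extra step, but it is true and follows from the containments $N[q]\subseteq N[q_k]$ already established there). From that point the paper proceeds constructively: it greedily colors the edges incident to the associatedness class of $\frac{m}{p_1}$ with at most $p_1-1$ colors per shared endpoint and then asserts that the remaining edges can be colored with no new colors because all other vertices have degree below $\Delta$; this last extension step is stated rather than proved. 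You instead argue by contradiction through a $\Delta$-critical subgraph $H$ and Vizing's Adjacency Lemma applied to the edge $\{p_1,v\}$, obtaining $\frac{m}{p_1}-p_1$ forced $\Delta$-neighbors of $v$ inside a set of size $p_1-2$, hence $\frac{m}{p_1}\le 2p_1-2$, which fails in every connected non-complete case by Corollary \ref{prop:connected--iff}. You correctly handle the delicate points you flag yourself: $\Delta(H)=\Delta$ by Vizing, a $\Delta$-vertex of $H$ retains all its edges so the edge to $p_1$ survives, and $\deg_H(p_1)\le p_1-1$. The trade-off is that your argument leans on a deeper citation (the Adjacency Lemma) but is logically airtight, whereas the paper's is elementary and exhibits an explicit coloring but leaves its final extension step informal; note also that your conclusion, unlike the paper's sketch, does not need the inequality $\frac{m}{p_1}-2>p_1-1$ from Corollary \ref{prop:regular--complete} except implicitly through the final case analysis.
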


\begin{proof}
Recall that by Vizing's Theorem and Lemma \ref{prop:max--min--degree}  we have that $\chi'(\ZZ_{m})$ is either $\Delta\left(\ZZ_{m}\right)=\frac{m}{p_1}-2$ or $\Delta\left(\ZZ_{m}\right)+1=\frac{m}{p_1}-1$.
Let us prove that $\frac{m}{p_1}-2$ colors is sufficient to color $\zt\left(\ZZ_{m}\right)$.

By  Lemma \ref{prop:max--min--degree}, the vertices with the maximal degree are the ones associated to $\frac{m}{p_1}$.
They are of the form $s\cdot \frac{m}{p_1}$ for $s=1,2,\ldots,p_{1}-1$, hence there is $p_{1}-1$ of them.

Firstly, we color all the edges incident to $\frac{m}{p_1}$. Then, for $s=2,\ldots,p_1-1$ we color
all the uncolored edges incident to $s\frac{m}{p_1}$ in a way that for
edge $s\frac{m}{p_1}\sim v $, we take a color different from
the colors of $i \frac{m}{p_1} \sim v $ for $i=1,2,\ldots,s-1$.
For
each $v$ which is adjacent to the associatedness class of $\frac{m}{p_1}$,
edges of the form $s  \frac{m}{p_1} \sim v$ are colored
using at most $p_{1}-1$ colors. Inequality $\frac{m}{p_1}-2> p_{1}-1$
is guaranteed by Corollary \ref{prop:regular--complete}.
Coloring the rest of the edges with no additional colors is possible because each vertex not in the associatedness class of $\frac{m}{p_1}$
has degree less than $\frac{m}{p_1}-2$.
\end{proof}

\bigskip
\subsection{Cycles in $\zt\left(\ZZ_m\right)$}

To the best of our knowledge, there is no full description of rings with Hamiltonian zero-divisor graphs. In \cite{MR2553598} the authors proved that if the total graph of a 
finite commutative ring is connected then it is also a Hamiltonian graph. In the next proposition we characterize Hamiltonian total zero-divisor graphs and prove that the total zero-divisor graph is Hamiltonian if and only if it is complete with at least 4 vertices.

\begin{Proposition}\label{hamiltonian}
The total zero-divisor graph $\zt\left(\ZZ_{m}\right)$ is Hamiltonian if and only if $m=p^2$, where $p \geq 5$.
\end{Proposition}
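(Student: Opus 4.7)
The plan is to treat the two directions separately, the forward one being essentially immediate. If $m=p^2$ with $p\geq 5$, then by Corollary~\ref{prop:connected--iff}(3) one has $\zt(\ZZ_{p^2})=K_{p-1}$, and every complete graph on at least three vertices is Hamiltonian, so $K_{p-1}$ with $p-1\geq 4$ admits a Hamiltonian cycle.

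For the converse, suppose $\zt(\ZZ_m)$ is Hamiltonian. Since Hamiltonicity forces connectedness, Corollary~\ref{prop:connected--iff} gives $m_i\geq 2$ for all $i$. If $m=p^2$, then $\zt(\ZZ_m)=K_{p-1}$, which for $p=2,3$ is $K_1$ or $K_2$ and so carries no Hamiltonian cycle; hence $p\geq 5$ and the claim holds. It remains to exclude $m=p^k$ with $k\geq 3$ as well as $m=p_1^{m_1}\cdots p_n^{m_n}$ with $n\geq 2$ and every $m_i\geq 2$.

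In both remaining cases I would run a uniform degree-counting argument. Let $A=\{p_1 u : u\in\UU(\ZZ_m)\}$ be the associatedness class of $p_1$, and let $B=\{r\cdot (m/p_1):1\leq r\leq p_1-1\}$, so $|B|=p_1-1$. A direct computation using $m_1\geq 2$ shows $N(v)=B$ for every $v=p_1 s\in A$: the condition $vw\equiv 0\pmod m$ forces $w\in(m/p_1)$, and for each candidate $w=r(m/p_1)$ the sum $v+w=p_1\bigl(s+r\,m/p_1^2\bigr)$ is a multiple of $p_1$, hence a zero-divisor. A parallel computation shows $A$ spans no edge of $\zt(\ZZ_m)$ in either remaining case: if $v=p_1 s$ and $v'=p_1 s'$ were adjacent, then $m\mid p_1^2 ss'$ with $\gcd(ss',m)=1$, which forces $m=p_1^2$, a case already excluded. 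Consequently every $v\in A$ spends both of its cycle-edges in $B$, producing $2|A|$ $A$--$B$ edges in any Hamiltonian cycle, while this number is bounded by $2|B|$ because each vertex of $B$ has cycle-degree $2$. Therefore $|A|\leq p_1-1$.

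The conclusion follows by evaluating $|A|=\varphi(m/p_1)=p_1^{m_1-2}(p_1-1)\prod_{i\geq 2}p_i^{m_i-1}(p_i-1)$ and verifying that it strictly exceeds $p_1-1$ in each remaining case: for $m=p^k$ with $k\geq 3$ we obtain $|A|=p^{k-2}(p-1)\geq p(p-1)>p-1$, and for $n\geq 2$ with every $m_i\geq 2$ the factor $\prod_{i\geq 2}p_i^{m_i-1}(p_i-1)\geq p_2(p_2-1)\geq 2$, so $|A|\geq 2(p_1-1)>p_1-1$; in both cases this contradicts $|A|\leq|B|$. The most delicate part of the argument is the structural check that $N(v)$ equals $B$ exactly and that $A$ induces no edges; both rely crucially on $m_1\geq 2$ and on being outside the case $m=p_1^2$, but once they are in hand the counting bound is routine.
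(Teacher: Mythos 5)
Your proof is correct and follows essentially the same route as the paper: your sets $A$ and $B$ are exactly the paper's $T$ and $S$, and both arguments rest on the same structural facts --- every neighbour of a vertex in the associate class of $p_1$ lies in the $(p_1-1)$-element set of nonzero multiples of $m/p_1$, and $|A|=\varphi(m/p_1)>p_1-1$ in all cases other than $m=p^2$. The only cosmetic difference is the final step: the paper invokes the vertex-cut criterion (a Hamiltonian graph $G$ satisfies $c(G-S)\leq |S|$ for every cut $S$), whereas you count the cycle-edges between $A$ and $B$ directly; these are interchangeable here.
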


\begin{proof}
Assume first $m=p_1^2$, where $p_1 \geq 5$. By Corollary \ref{prop:connected--iff}(3), we have $\zt\left(\ZZ_{m}\right)\cong K_{p_{1}-1}$, which is 
Hamiltonian.


If $\zt\left(\ZZ_{m}\right)$ is not connected, it is clearly not Hamiltonian. Moreover, if $m=2^2$ or $m=3^2$
then $\zt\left(\ZZ_{m}\right)$ is $K_1$ or $K_2$ and hence not Hamiltonian. So, suppose that $n\geq2$  or $m_{1}\geq3$ and note that $\frac{m}{p_1}>p_{1}$. 
For $$S=\left\{ r\cdot \frac{m}{p_1}; \; 1\leq r< p_{1} \right\} \; \text{ and} \;
T=\left\{ p_{1}s; \; 1\leq s < \frac{m}{p_1} ,\textrm{gcd}\left(s,m\right)=1\right\} 
$$
we have $\left|S\right|=p_{1}-1$. Moreover,  $r\leq p_{1}-1\leq \frac{m}{p_1}-1$ implies $\textrm{gcd}\left(r,m\right)=1$,
so $\left|S\right|\leq\left|T\right|$. Since $\textrm{gcd}\left(\frac{m}{p_1}-1,m\right)=1$
and $\frac{m}{p_1}-1 > p_{1}-1$ it follows that $\left|S\right|<\left|T\right|$. 

Notice that  vertices in $T$ have no neighbours in $V\left(\zt(\ZZ_{m})\right)-S$.
Therefore, the number of components of  $\zt\left(\ZZ_{m}\right)-S$ is at least equal to  $\left|T\right|>\left|S\right|$,
so $\zt\left(\ZZ_{m}\right)$ is not Hamiltonian.
\end{proof}

As a corollary of Lemma \ref{prop:max--min--degree} we have the following property of the total zero-divisor graph.

\begin{Corollary}
For any positive integer $m$, the total zero-divisor graph $\zt\left(\ZZ_{m}\right)$ is not Eulerian.
\end{Corollary}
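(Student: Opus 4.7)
My plan is to prove the statement using the classical parity criterion: every vertex of an Eulerian graph must have even degree, so it suffices either to produce a vertex of odd degree in $\zt(\ZZ_m)$ or to verify directly that the graph contains no cycle at all. Accordingly, I would split the argument according to the structure of the graph.

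The central case is that $\zt(\ZZ_m)$ is connected but not complete, which is exactly the setting of Lemma \ref{prop:max--min--degree}. It yields the two extremal degrees $\delta = p_1 - 1$ (realised at the vertex $p_1$) and $\Delta = \frac{m}{p_1} - 2$ (realised at $\frac{m}{p_1}$), where $p_1$ is the smallest prime divisor of $m$. If $p_1 = 2$ then $\delta = 1$ is odd, so the vertex $p_1 = 2$ already has odd degree. If instead $p_1$ is odd, then every prime divisor of $m$ is odd, hence $m$ and $\frac{m}{p_1}$ are odd, which forces $\Delta = \frac{m}{p_1} - 2$ to be odd as well. In either sub-case, an odd-degree vertex is exhibited and the graph cannot be Eulerian.

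For the complete connected case $\zt(\ZZ_{p^2}) \cong K_{p-1}$ coming from Corollary \ref{prop:connected--iff}(3): if $p \geq 5$ each vertex has odd degree $p-2$, and the tiny remnants $K_1$ ($p=2$) and $K_2$ ($p=3$) contain no cycle at all. The acyclic graphs catalogued in Corollary \ref{acyclic}, namely $P_1, P_2, P_3, K_{1,2p-2}\cup 2K_1$ and $(p+q-2)K_1$, are similarly dismissed as either edgeless or containing vertices of degree one.

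The step I expect to be the main obstacle is the disconnected-with-edges case, because Lemma \ref{prop:max--min--degree} is stated only for the connected situation. Here the plan is to work inside the non-trivial component of $\zt(\ZZ_m)$ and mimic the degree computation from the proof of Lemma \ref{prop:max--min--degree}: Corollary \ref{cor:dominating--vertex} still localises each vertex under some $\frac{m}{p_k}$, so inspecting the degree of a carefully chosen vertex of the form $p_k$ or $\frac{m}{p_k}$ (exploiting the parity of $p_1$ as in the connected case) should again produce an odd-degree vertex and finish the proof.
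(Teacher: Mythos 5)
Your treatment of the connected cases coincides with the paper's: the same parity argument on $\delta=p_1-1$ when $p_1=2$ and on $\Delta=\frac{m}{p_1}-2$ when $p_1$ is odd (Lemma \ref{prop:max--min--degree}), and the same reading of the complete case $K_{p-1}$. The divergence is in the disconnected case. The paper disposes of it in one line: a disconnected graph is not Eulerian, which is the standard convention that an Eulerian graph must be connected. Under that convention, what you flag as ``the main obstacle'' requires no work at all.

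More importantly, the plan you sketch for that obstacle cannot be carried out. You propose to exhibit an odd-degree vertex inside the non-trivial component by mimicking the degree computation of Lemma \ref{prop:max--min--degree}. Take $m=18$: the graph $\zt(\ZZ_{18})$ is disconnected (since $m_1=1$ for $p_1=2$), it is not acyclic ($3\sim 6\sim 12\sim 3$ is a triangle), and \emph{every} vertex has even degree --- the vertices $3,9,15$ have degree $2$, the vertices $6,12$ have degree $4$, and the remaining six vertices $2,4,8,10,14,16$ are isolated. In fact the unique non-trivial component $\{3,6,9,12,15\}$ is a connected graph with all degrees even, so it carries an Eulerian circuit. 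Hence neither of the two tools you allow yourself (an odd-degree vertex, or acyclicity) applies to $m=18$; the only way to conclude there is to invoke disconnectedness itself. (This example also shows that the connectedness convention is genuinely needed: under a literal reading of ``contains a cycle consisting of all the edges of $G$,'' the circuit of that component covers every edge of $\zt(\ZZ_{18})$, so the claim would fail without it.) The fix is trivial --- replace your third paragraph by the one-line appeal to disconnectedness --- but as written that part of the proposal is a dead end.
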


\begin{proof}
If $\zt\left(\ZZ_{m}\right)$ is not connected, obviously it is not Eulerian. If $\zt\left(\ZZ_{m}\right)$ is
connected, recall that it is Eulerian if and only if every vertex has an even degree.
In the case $\zt\left(\ZZ_{m}\right)$ is complete,  by Corollary \ref{prop:connected--iff}(3), $\zt\left(\ZZ_{m}\right)\cong K_{p_{1}-1}$. Therefore,
every vertex has a degree $p_1-2$, which is either 0 or odd. Thus,  $\zt\left(\ZZ_{m}\right)$ is not Eulerian.
Otherwise, if $\zt\left(\ZZ_{m}\right)$ is connected but not complete, we make a use of Lemma \ref{prop:max--min--degree}.
If $p_{1}=2$, then $\delta\left(\zt\left(\ZZ_{m}\right)\right)=1$, and otherwise if $p_{1}>2$,  $m$ is odd and so 
$\Delta\left(\zt\left(\ZZ_{m}\right)\right)=\frac{m}{p_1}-2$ is odd as well. In both cases we have that $\zt\left(\ZZ_{m}\right)$ is not Eulerian.
\end{proof}

\bigskip
\subsection{Domination number}

Recall that a \emph{dominating set} for a graph $G$ is a subset $D$ of $V(G)$ such that every vertex not in $D$ is adjacent to at least one member of $D$. The
\emph{domination number} $\gamma(G)$ is the number of vertices in a smallest dominating set for $G$.

It was proved in \cite{MR2240381} that the domination number $\gamma(R)$ is equal to the number of distinct maximal ideals of a finite commutative ring with identity $R$, 
if $R\ne \ZZ_2 \times F$ for any field $F$ and $R$ is not a domain. The following proposition shows the same is true for the total zero-divisor graph of $\ZZ_m$.

\begin{Proposition}
If $\zt\left(\ZZ_{m}\right)$ is a connected graph, then the domination
number is equal to
$$\gamma\left(\zt\left(\ZZ_{m}\right)\right)=n.$$
\end{Proposition}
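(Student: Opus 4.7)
The plan is to prove the two inequalities $\gamma(\zt(\ZZ_m)) \leq n$ and $\gamma(\zt(\ZZ_m)) \geq n$ separately, using the explicit descriptions of neighborhoods provided by Lemma~\ref{prop:max--min--degree}. Throughout the argument I may assume $m_i \geq 2$ for every $i$, which is guaranteed by Corollary~\ref{prop:connected--iff} because $\zt(\ZZ_m)$ is connected.

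For the upper bound, I would propose the explicit dominating set
$$D=\left\{\tfrac{m}{p_1},\tfrac{m}{p_2},\ldots,\tfrac{m}{p_n}\right\}.$$
By Lemma~\ref{prop:max--min--degree}, $N\!\left[\tfrac{m}{p_k}\right]=\{p_k r : 1\le r\le \tfrac{m}{p_k}-1\}$, which is precisely the set of nonzero multiples of $p_k$ in $\ZZ_m$. Any nonzero zero-divisor $v\in\ZZ_m$ satisfies $\gcd(v,m)>1$, so some prime $p_k$ divides $v$, placing $v\in N\!\left[\tfrac{m}{p_k}\right]$. Hence $D$ is dominating and $\gamma(\zt(\ZZ_m))\le n$.

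For the lower bound, the strategy is to exhibit $n$ pairwise disjoint closed neighborhoods, each of which any dominating set must intersect. The natural candidates are the closed neighborhoods of the primes $p_1,\ldots,p_n$, which are themselves vertices because $m_i\geq 2$ forces each $p_i$ to be a nonzero zero-divisor. By Lemma~\ref{prop:max--min--degree},
$$N[p_i]=\{p_i\}\cup\left\{r\cdot \tfrac{m}{p_i}:1\le r\le p_i-1\right\}.$$
Once I establish that $N[p_1],\ldots,N[p_n]$ are pairwise disjoint in $\ZZ_m$, any dominating set $D$ must meet each $N[p_i]$, giving $|D|\geq n$ and completing the proof. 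The case $n=1$ is trivial since the graph is nonempty.

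The main technical step, and the one I expect to be most delicate, is verifying the disjointness claim, which splits into three cases for $i\neq j$: (a) $p_i\ne p_j$ is clear; (b) the equality $p_i\equiv r\tfrac{m}{p_j}\pmod m$ would force $m\mid p_i p_j$, which is impossible because $p_i^2p_j^2\mid m$ (using $m_i,m_j\ge 2$) and hence $m>p_ip_j$; (c) the equality $r\tfrac{m}{p_i}\equiv s\tfrac{m}{p_j}\pmod m$ rearranges to $r p_j - s p_i = k p_i p_j$ for some integer $k$ (using that $p_i^2p_j^2\mid m$ to clear denominators), forcing $p_i\mid r$ and contradicting $1\le r\le p_i-1$. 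This case analysis closes the argument.
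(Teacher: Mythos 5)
Your proof is correct. The upper bound is exactly the paper's: the set $\left\{\frac{m}{p_1},\ldots,\frac{m}{p_n}\right\}$ dominates because every nonzero zero-divisor is a multiple of some $p_k$ and hence lies in $N\!\left[\frac{m}{p_k}\right]$. Where you diverge is the lower bound. The paper uses an exchange argument based on Corollary~\ref{cor:dominating--vertex}: every vertex $v$ satisfies $N(v)\setminus\{\frac{m}{p_k}\}\subseteq N(\frac{m}{p_k})\setminus\{v\}$ for some $k$, so any dominating set can be replaced, element by element, with one contained in $\{\frac{m}{p_1},\ldots,\frac{m}{p_n}\}$; the paper then concludes $\gamma\geq n$ somewhat tersely (strictly, one still has to check that no proper subset of these $n$ vertices dominates, e.g.\ because $p_k$ is adjacent to $\frac{m}{p_j}$ only when $j=k$). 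You instead run a packing argument: the closed neighbourhoods $N[p_1],\ldots,N[p_n]$ are pairwise disjoint, and since each must meet any dominating set, $\gamma\geq n$ follows at once. Your three-case verification of disjointness (reducing $p_i\equiv r\frac{m}{p_j}$ and $r\frac{m}{p_i}\equiv s\frac{m}{p_j}$ modulo $m$ to divisibility contradictions using $p_i^2p_j^2\mid m$) is sound, and the resulting lower-bound argument is self-contained and arguably tighter than the one in the paper; the paper's exchange argument, on the other hand, yields the extra structural fact that a minimum dominating set may always be taken inside $\{\frac{m}{p_1},\ldots,\frac{m}{p_n}\}$.
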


\begin{proof}
By Corollary \ref{cor:dominating--vertex}, there exists $k$, such that $v|_{m}\left(\frac{m}{p_{k}}\right)$. Hence, 
$N\left(v\right)\setminus\left\{ \frac{m}{p_{k}}\right\} \subseteq N\left(\frac{m}{p_{k}}\right)\setminus\left\{ v\right\}$. Therefore, if $v$ is in a dominating set $D$ for
$\zt\left(\ZZ_{m}\right)$, then $D\cup \left\{\frac{m}{p_{k}}\right\} - \left\{v\right\}$ is dominating set as well. Hence $\gamma\left(\zt\left(\ZZ_{m}\right)\right)\geq n$.

We claim that $D=\left\{ \frac{m}{p_{i}}; \; i=1,2,\ldots,n \right\} $
is a dominating set. Namely, for an arbitrary nonzero zero-divisor $u=p_{1}^{u_{1}}p_{2}^{u_{2}}\cdots p_{n}^{u_{n}}s_{u}$
there exists $k$ 
such that $u_{k}\geq1$. By assumption that $\zt\left(\ZZ_{m}\right)$ is connected, it follows that  $m_{k}\geq2$, thus  
$u\cdot \frac{m}{p_{k}}\equiv0$ and $p_{k}|\left(u+\frac{m}{p_{k}}\right)$. Therefore $u+\frac{m}{p_{k}}\in Z\left(\ZZ_{m}\right)$,
hence $u$ is adjacent to $\frac{m}{p_{k}}\in D$. So, $\gamma\left(\zt\left(\ZZ_{m}\right)\right)=n$.
%
%
\end{proof}

\bigskip
\subsection{Metric dimension}

The \emph{metric dimension} $\dim_M(G)$ of a graph $G$ is the minimum cardinality of a set $S \subseteq V(G)$ such that all other vertices are uniquely determined by their distances to the vertices in $S$. A set $S$ is known as a resolving set. Determing the metric dimension of a graph is known to be an NP-complete problem. 

In \cite{MR3576665} and  \cite{MR3563754}  the authors give certain bounds on the metric dimension of a zero divisor graph and of the total graph in some specific ring. 
For the total zero-divisor graph of $\ZZ_m$ we are able to compute its metric dimension and the proof allows us to apply the result also for the zero-divisor graph of $\ZZ_m$.

\begin{Proposition}
\label{prop:metric--dimension}
If  $\zt\left(\ZZ_{m}\right)$
is connected, then 
$$\dim_M(\zt\left(\ZZ_{m}\right))=
\begin{cases}
 m-\varphi\left(m\right)-\tau\left(m\right)+n+1,& n\geq 2,\\
 m-\varphi\left(m\right)-\tau\left(m\right)+1,& n=1.
\end{cases}
$$
\end{Proposition}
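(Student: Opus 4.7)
The plan is to show that $\dim_M(\zt(\ZZ_m))$ equals the number of vertices of $\zt(\ZZ_m)$ minus the number of equivalence classes of indistinguishable vertices, where $u$ and $v$ are \emph{indistinguishable} if $N(u) \setminus \{v\} = N(v) \setminus \{u\}$. By the observation preceding Lemma 4.2, associated zero-divisors are always indistinguishable, so each of the $\tau(m) - 2$ associatedness classes of nonzero zero-divisors lies inside one equivalence class. If $n \geq 2$, connectedness forces $m_i \geq 2$ for every $i$ (Corollary 3.5), and a direct computation would show that for each prime $p_i$ the classes of $p_i^{m_i-1}$ and $p_i^{m_i}$ merge into a single equivalence class, since
\[
N(p_i^{m_i-1}) = N(p_i^{m_i}) = \{k \cdot p_i \textstyle\prod_{j \neq i} p_j^{m_j} : 1 \leq k \leq p_i^{m_i-1}-1\}.
\]
Indeed, $\ann(p_i^{m_i-1})$ already matches this set and the sum condition is automatic, while the strictly larger $\ann(p_i^{m_i}) = (\prod_{j \neq i} p_j^{m_j})$ gets trimmed because any $w = k\prod_{j \neq i} p_j^{m_j}$ with $p_i \nmid k$ satisfies $p_i^{m_i} + w \in \UU(\ZZ_m)$. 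A case analysis on whether a divisor of $m$ admits some intermediate exponent $1 \leq e_\ell \leq m_\ell - 1$ in its factorization would confirm that no further mergings occur, giving $\tau(m) - n - 2$ equivalence classes when $n \geq 2$ and $\tau(m) - 2$ when $n = 1$.

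The standard lower bound, namely that two indistinguishable vertices both outside a resolving set have identical distance vectors, then yields $\dim_M(\zt(\ZZ_m)) \geq (m - \varphi(m) - 1) - (\text{number of equivalence classes})$, which matches the right-hand side of the proposition in both cases after using $|V(\zt(\ZZ_m))| = m - \varphi(m) - 1$.

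For the matching upper bound I would take $S$ to be the complement of a set of representatives — one per equivalence class — chosen to be the divisor of $m$ lying in each class. Two vertices in the same class are distinguished because at most one of them lies outside $S$, and the other has self-distance $0$. For $u, v$ in different equivalence classes, pick a witness $w \neq u, v$ to the failure of indistinguishability, so that $w \in N(u) \triangle N(v)$ and hence $d(u,w) \neq d(v,w)$; if $w \in S$ we are done. If $w$ is the excluded representative of its class, any of its classmates $w' \in S$ shares $w$'s adjacency relation with both $u$ and $v$ and resolves them just as well, unless $u$ or $v$ itself lies in the class of $w$. The chief technical obstacle is precisely this last edge case: I would use the bound $\diam{\zt(\ZZ_m)} \leq 3$ from Theorem 3.2 together with the divisor structure (Corollary 4.3), which places every vertex below some $\tfrac{m}{p_k}$, to exhibit alternative distinguishing witnesses inside $S$.
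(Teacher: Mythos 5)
Your overall strategy coincides with the paper's: count the classes of indistinguishable vertices, obtain the lower bound from the fact that the complement of a resolving set contains at most one vertex per class, and then show that the complement of a transversal consisting of divisors of $m$ is resolving. The parts you actually compute are correct; in particular, your verification that $N\left(p_{i}^{m_{i}}\right)=N\left(p_{i}^{m_{i}-1}\right)$ for $n\geq 2$ is exactly the paper's argument. However, two steps carrying real content are left as assertions. The claim that \emph{no further mergings occur} is the crux of the lower bound and is not a routine afterthought: one must exhibit, for any two non-associated divisors $u=p_{1}^{u_{1}}\cdots p_{n}^{u_{n}}$ and $v=p_{1}^{v_{1}}\cdots p_{n}^{v_{n}}$ other than a pair $\left\{p_{i}^{m_{i}},p_{i}^{m_{i}-1}\right\}$, a concrete vertex adjacent to exactly one of them. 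The paper does this by choosing $k$ with $u_{k}<v_{k}$ and taking $w=m/p_{k}^{v_{k}}$ when $v_{k}<m_{k}$ and $w=m/p_{k}^{v_{k}-1}$ when $v_{k}=m_{k}$; without some such explicit witness your count of $\tau(m)-n-2$ classes is unproved.

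Your upper bound also misidentifies the actual obstacle. If $u$, $v$, $w$ are all excluded representatives, they represent three \emph{distinct} classes (there is one representative per class), so the case ``$u$ or $v$ lies in the class of $w$'' cannot arise and needs no treatment. The genuine difficulty is that $w$ may have no classmate in $S$ at all: the associatedness class of a divisor $d$ of $m$ has $\varphi(m/d)$ elements, so for even $m$ with $n\geq 2$ the class of $m/2$ is the singleton $\left\{m/2\right\}$, and if $m/2$ is the witness it cannot be replaced by an associate. (The paper's substitution $z'=zs$ with $s\in\UU(\ZZ_{m})\setminus\{1\}$ quietly meets the same point, since $(m/2)s=m/2$ for every unit $s$.) Since $N(m/2)$ consists precisely of the nonzero even vertices other than $m/2$, this situation occurs exactly when one of $u$, $v$ is even and the other odd, and closing the gap requires showing directly that some other class then also separates them; invoking $\diam{\zt\left(\ZZ_{m}\right)}\leq 3$ and the divisibility relation of Corollary \ref{cor:dominating--vertex} does not by itself produce such a witness.
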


\begin{proof}
Recall that vertices $a$ and $b$ are indistinguishable in  a simple graph 
if 
$N\left(a\right)\setminus\left\{ b\right\} =N\left(b\right)\setminus\left\{ a\right\} $.
Note that a simple graph $G$ is invariant to permutation of indistinguishable vertices. So, complement of a resolving set of $G$ cannot contain a pair
of indistinguishable vertices because, otherwise, these would have the same ordered set of distances from the elements of resolving set.
Therefore, complement of a resolving set can contain at most one element
from each class of indistinguishable vertices. 

Assume first $n \geq 2$. Clearly, associates are indistinguishable in $\zt\left(\ZZ_{m}\right)$.
For every $i= 1,2,\ldots,n $, vertices $p_{i}^{m_{i}}$
and $p_{i}^{m_{i}-1}$ are also indistinguishable in $\zt\left(\ZZ_{m}\right)$:
clearly, $N\left(p_{i}^{m_{i}-1}\right)\setminus\left\{ p_{i}^{m_{i}}\right\} \subseteq N\left(p_{i}^{m_{i}}\right)\setminus\left\{ p_{i}^{m_{i}-1}\right\} $;
if $w\in N\left(p_{i}^{m_{i}}\right)\setminus\left\{ p_{i}^{m_{i}-1}\right\} $,
then $p_{i}|w$ because of $p_{i}^{m_{i}}+w\in Z\left(\ZZ_{m}\right)$,
and $\left(\frac{m}{p_{i}^{m_{i}}}\right)|w$ because of $p_{i}^{m_{i}}w\equiv0$,
but then $\textrm{lcm}\left(p_{i},\frac{m}{p_{i}^{m_{i}}}\right)=\frac{m}{p_{i}^{m_{i}-1}}$
is also a divisor of $w$ so $w\in N\left(p_{i}^{m_{i}}\right)\setminus\left\{ p_{i}^{m_{i}-1}\right\} $. Next we show that those two are the only types of indistinguishability in $\zt\left(\ZZ_{m}\right)$.

Let $u$ and $v$ be vertices which are neither associates nor $p_{i}^{m_{i}}$
and $p_{i}^{m_{i}-1}$ for some $i\in\left\{ 1,2,\ldots,n\right\} $.
By Lemma \ref{lem:associated--divisor}, there is no loss of generality
if we assume that $u|m$ and $v|m$. Take $u=p_{1}^{u_{1}}p_{2}^{u_{2}}\cdots p_{n}^{u_{n}}$
and $v=p_{1}^{v_{1}}p_{2}^{v_{2}}\cdots p_{n}^{v_{n}}$. If $u\neq v$
then there exists $k$ such that $u_{k}\neq v_{k}$.
Without  loss of generality assume $u_{k}<v_{k}$. If $v_{k}<m_{k}$
then $w=\frac{m}{p_k^{v_{k}}}$ is adjacent to $v$ but it is not adjacent to
$u$. Hence, $u$ and $v$ are not indistinguishable. Else if $v_{k}=m_{k}$,
then from the assumption above that $u$ and $v$ are not $p_{i}^{m_{i}}$
and $p_{i}^{m_{i}-1}$ for some $i\in\left\{ 1,2,\ldots,n\right\} $,
we get $u_{k}<m_{k}-1.$ Then $w=\frac{m}{p_k^{v_{k}-1}}$ is adjacent to $v$
but it is not adjacent to $u$. Again, $u$ and $v$ are not indistinguishable.

By the above arguments, the number of different classes of indistinguishable vertices is equal to the number of associatedness classes
minus the number of prime factors of $m$, which is $\tau\left(m\right)-2-n$ by Lemma \ref{lem:associated--divisor}.

Since the complement of a resolving set can contain at most one element from each class of indistinguishable vertices, a resolving set must have at least 
$$\left|V\left(\zt\left(\ZZ_{m}\right)\right)\right|-\left(\tau\left(m\right)-2-n\right)=
m-\varphi\left(m\right)-\tau\left(m\right)+n+1$$
elements.

Consider the set 
$$
B=\left\{ p_{1}^{b_{1}}p_{2}^{b_{2}}\cdots p_{n}^{b_{n}}s_{b}; \; p_{1}^{b_{1}}p_{2}^{b_{2}}\cdots p_{n}^{b_{n}}|m, s_{b}\in \UU\left(R\right) \setminus \{1\} \right\} \cup\left\{ p_{i}^{m_{i}}; \; i=1,2,...,n \right\} 
$$
with
$$|B|=(m-\varphi\left(m\right)-1)-(\tau\left(m\right)-2)+n=m-\varphi\left(m\right)-\tau\left(m\right)+n+1$$
and whose complement contains all the vertices corresponding to the divisors of $m$ except those in set 
$\{ p_{i}^{m_{i}}; \; 1 \leq i \leq n\}$.
We are going to show that $B$ is a resolving set. Let $x=p_{1}^{x_{1}}p_{2}^{x_{2}}\cdots p_{n}^{x_{n}}$
and $y=p_{1}^{y_{1}}p_{2}^{y_{2}}\cdots p_{n}^{y_{n}}$ be different
elements of $V\left(\zt\left(\ZZ_{m}\right)\right)\setminus B$.
Then $x$ and $y$ are not indistinguishable by definition of $B$. So, without loss of generality assume there 
exists $z\in N\left(x\right)\setminus N\left(y\right)$.
If $z\in B$, then $d\left(x,z\right)=1\neq d\left(y,z\right)$ so
ordered set of distances to elements of $B$ differ for $x$ and $y$.
Else, $z\notin B$ so $z|m$ and $z\neq p_{i}^{m_{i}}$ for any $1\leq i \leq n$.
Since $\zt\left(\ZZ_{m}\right)$ is connected and $n\geq2$, we have $m\geq36$ and thus there exists $s\in \UU\left(R\right) \setminus \{1\}$. 
Take $z'=z\cdot s\in B$ and observe that $d\left(x,z'\right)=1\neq d\left(y,z'\right)$.
So, $B$ is a resolving set and the metric dimension of $\zt\left(\ZZ_{m}\right)$
is equal to $m-\varphi\left(m\right)-\tau\left(m\right)+n+1$.

Now, let $n=1$, or equivalently $m=p_1^{m_1}$. The only indistinguishable elements are the associates since $p_1^{m_1}=0$.
It follows that   $\dim_M(\zt\left(\ZZ_{m}\right))=m-\varphi\left(m\right)-\tau\left(m\right)+1$.
\end{proof}

\begin{remark}
Note that the same proof would work to show that the metric dimension
of a connected zero-divisor graph is equal to $m-\varphi\left(m\right)-\tau\left(m\right)+1$.
\end{remark}

\bibliographystyle{plain}
\bibliography{TZDG}

\begin{thebibliography}{10}

\bibitem{MR2553598}
S.~Akbari, D.~Kiani, F.~Mohammadi, and S.~Moradi.
\newblock The total graph and regular graph of a commutative ring.
\newblock {\em J. Pure Appl. Algebra}, 213(12):2224--2228, 2009.

\bibitem{MR2441996}
David~F. Anderson and Ayman Badawi.
\newblock The total graph of a commutative ring.
\newblock {\em J. Algebra}, 320(7):2706--2719, 2008.

\bibitem{MR3675901}
David~F. Anderson and Ayman Badawi.
\newblock The zero-divisor graph of a commutative semigroup: a survey.
\newblock In {\em Groups, modules, and model theory---surveys and recent
  developments}, pages 23--39. Springer, Cham, 2017.

\bibitem{MR1700509}
David~F. Anderson and Philip~S. Livingston.
\newblock The zero-divisor graph of a commutative ring.
\newblock {\em J. Algebra}, 217(2):434--447, 1999.

\bibitem{MR944156}
Istv\'an Beck.
\newblock Coloring of commutative rings.
\newblock {\em J. Algebra}, 116(1):208--226, 1988.

\bibitem{MR2932598}
Jim Coykendall, Sean Sather-Wagstaff, Laura Sheppardson, and Sandra Spiroff.
\newblock On zero divisor graphs.
\newblock In {\em Progress in commutative algebra 2}, pages 241--299. Walter de
  Gruyter, Berlin, 2012.

\bibitem{MR3563754}
David Dol\v{z}an.
\newblock The metric dimension of the total graph of a finite commutative ring.
\newblock {\em Canad. Math. Bull.}, 59(4):748--759, 2016.
\newblock Paging previously given as: 1--12.

\bibitem{MR1322960}
David Eisenbud.
\newblock {\em Commutative algebra}, volume 150 of {\em Graduate Texts in
  Mathematics}.
\newblock Springer-Verlag, New York, 1995.
\newblock With a view toward algebraic geometry.

\bibitem{MR575344}
Hideyuki Matsumura.
\newblock {\em Commutative algebra}, volume~56 of {\em Mathematics Lecture Note
  Series}.
\newblock Benjamin/Cummings Publishing Co., Inc., Reading, Mass., second
  edition, 1980.

\bibitem{MR1915011}
S.~B. Mulay.
\newblock Cycles and symmetries of zero-divisors.
\newblock {\em Comm. Algebra}, 30(7):3533--3558, 2002.

\bibitem{MR3576665}
S.~Pirzada and Rameez Raja.
\newblock On the metric dimension of a zero-divisor graph.
\newblock {\em Comm. Algebra}, 45(4):1399--1408, 2017.

\bibitem{MR2240381}
Shane~P. Redmond.
\newblock Central sets and radii of the zero-divisor graphs of commutative
  rings.
\newblock {\em Comm. Algebra}, 34(7):2389--2401, 2006.

\end{thebibliography}

\end{document}